\DeclareMathOperator*{\argmin}{argmin} 
\newcommand{\Span}{\mathrm{span}}
\newcommand{\dimension}{\mathrm{dim}}
\newcommand{\bt}{\boldsymbol{t}}
\newcommand{\bp}{\boldsymbol{p}}
\newcommand{\bc}{\boldsymbol{c}}
\newcommand{\bx}{\boldsymbol{x}}
\newcommand{\tequi}{\boldsymbol{t_{\mathsf{equi}}}}
\newtheorem{theorem}{Theorem}
\newtheorem{lemma}[theorem]{Lemma}
\newtheorem{assumption}[theorem]{Assumption}
\newtheorem{corollary}[theorem]{Corollary}
\newcommand{\R}{\mathbb R}
\newcommand{\PP}{\mathbb P}
	\newcommand{\comment}[1]{\textcolor{purple}{#1}}
	\newcommand{\comment}[1]{}	
\title{Subspace Acceleration for a Sequence of Linear Systems and Application to Plasma Simulation}
\author{Margherita Guido\thanks{Ecole Polytechnique Fédérale de Lausanne (EPFL), Institute of Mathematics, 1015 Lausanne, Switzerland (margherita.guido@epfl.ch, daniel.kressner@epfl.ch)}\,\,\thanks{Ecole Polytechnique Fédérale de Lausanne (EPFL), Swiss Plasma Center (SPC), 1015 Lausanne, Switzerland (paolo.ricci@epfl.ch)}  \and Daniel Kressner\footnotemark[1] \and Paolo Ricci\footnotemark[2] }
\begin{document}

\maketitle

\begin{abstract}

We present an acceleration method for sequences of large-scale linear systems, such as the ones arising from the numerical solution of time-dependent partial differential equations coupled with algebraic constraints. We discuss different approaches to leverage the subspace containing the history of solutions computed at previous time steps in order to generate a good initial guess for the iterative solver. In particular, we propose a novel combination of reduced-order projection with randomized linear algebra techniques, which drastically reduces the number of iterations needed for convergence.
We analyze the accuracy of the initial guess produced by the reduced-order projection when the coefficients of the linear system depend analytically on time. Extending extrapolation results by Demanet and Townsend to a vector-valued setting, we show that the accuracy improves rapidly as the size of the history increases, a theoretical result confirmed  by our numerical observations. 
In particular, we apply the developed method to the simulation of plasma turbulence in the boundary of a fusion device, showing that the time needed for solving the linear systems is significantly reduced.

\end{abstract}

\section{Introduction}
\label{sec:intro}

The numerical solution of time-dependent partial differential equations (PDEs) often leads to sequences of linear systems of the form 
\begin{equation}
    \label{eq:seq}
    A(t_{i}) \boldsymbol{x}(t_{i}) = \boldsymbol{b}(t_{i}) \quad \quad i=0,1,2 \cdots,
\end{equation}
where $t_{0}< t_1 <t_2 < \cdots$ is a discretization of time $t$, and both the system matrix $A(t_{i}) \in \mathbb{R}^{n \times n}$ and the right-hand side $\boldsymbol{b}(t_{i}) \in \mathbb{R}^n$ depend on time. Typically, the systems~\eqref{eq:seq} are available only consecutively. 
Such sequences of linear systems can arise in a number of applications, including implicit time stepping schemes for the solution of PDEs or iterative solutions of non-linear equations and optimization problems. 
A relevant example is given by time-dependent PDEs solved in  presence of algebraic constraints. In this case, even when an explicit time stepping method is used to evolve the nonlinear PDE, the discretization of  the algebraic constraints leads to linear systems that need to be solved at every (sub-)timestep. 
This  is the case of the simulation of  turbulent plasma dynamics ~\cite{Fasoli2016}, where a linear constraint (Maxwell equations) is imposed upon the plasma dynamics described by a set of non linear fluid or kinetics equations. The  linear systems resulting from the discretized algebraic constraints may feature millions of degrees of freedom, hence their solution is often computationally very expensive. 

 One usually expects that the linear system~\eqref{eq:seq} changes slowly in subsequent time steps.
 This work is focused on exploiting this property to accelerate iterative solvers, such as
CG~\cite{Hestenes1952} for symmetric positive definite matrices and GMRES \cite{Saad1986} for general matrices. An obvious way to do so is to supply the iterative solver for the timestep $t_{i+1}$ with the solution of~\eqref{eq:seq} at timestep $t_{i}$, as initial guess. As a more advanced technique, in the context of Krylov subspace methods, \textit{subspace recycling methods}~\cite{Soodhalter2020} such as GCROT~\cite{Mathematics2012} and GMRES-DR~\cite{Morgan2000} have been proposed. Such methods have been developed in the case of a single linear system, to enrich the information when restarting the iterative solver. The idea behind is often to accelerate the convergence by suppressing parts of the  spectrum of the matrix, including the corresponding approximate invariant subspace in the Krylov minimization subspace.
GCROT and GMRES-DR have then been adapted to sequences of linear systems in~\cite{Parks2006a}, recycling selected subspaces from one system to the next. 
For this class of methods to be efficient, it is necessary that the sequence of matrices undergoes local changes only, that is, the difference $A(t_{i+1})- A(t_{i})$ is computationally  cheap to apply. For example, one can expect this difference matrix to be sparse when time dependence is restricted to a small part of the computational domain, e.g., through time-dependent boundary conditions. We refer to~\cite{Soodhalter2020} for a more complete survey of subspace recycling methods and their applications.
In \cite{Carlberg2016}, subspace recycling was combined with goal-oriented POD (Proper Orthogonal Decomposition) in order to limit the size of the subspaces involved in an augmented CG approach. Simplifications occur when the matrices $A(t_i)$ are actually a fixed matrix $A$ shifted by different scalar multiples of the identity matrix, because Krylov subspaces are invariant under such shifts. In the context of subspace recycling, this property has been exploited in, e.g.,~\cite{Soodhalter2014}, and in~\cite{Soodhalter2016} it is shown how a smoothly varying right-hand side can be incorporated.

When $A(t_i)$ and $\textbf{b}(t_i)$ in~\eqref{eq:seq} are samples of smooth matrix/vector-valued functions, one expects that the subspace of the previously computed solutions contains a very good approximation of the current one. This can be exploited to construct a better initial guess, either explicitly through (polynomial) extrapolation, or implicitly through projection techniques.
Examples of the extrapolation approach include polynomial POD extrapolation~\cite{Grinberg2011}, weighted group extrapolation methods~\cite{Ye2020} and a stabilized, least-squares  polynomial extrapolation method~\cite{Austin2021}, for the case that only the right-hand side evolves in time. For the same setting, projection techniques have been introduced by Fischer~\cite{Fischer1998}. Following this first work, several approaches have been developed to extract an initial guess from the solution of a reduced-order model, constructed from projecting the problem to a low-dimensional subspace spanned by previous solutions.
In~\cite{Tromeur-Dervout2006}, such an approach is applied to fully implicit discretizations of nonlinear evolution problems, while~\cite{Markovinovic2006} applies the same idea to the so called IMPES scheme used for simulating two-phase flows through heterogeneous porous media. 

In this paper, we develop a new projection technique for solving sequences of linear systems that combines projection with randomized linear algebra techniques, leading to considerably reduced cost.  Moreover, a novel convergence analysis of the algorithm is carried out to show its efficiency. This is also proved numerically by applying the algorithm to the numerical simulation of turbulent plasma in the boundary of a fusion device.

The rest of this paper is organized as follows. In Section~\ref{sec:algorithms}, we first discuss general subspace acceleration techniques based on solving a projected linear system and then explain how randomized techniques can be used to speed up existing approaches. In Section~\ref{sec:theory}, a convergence analysis of these subspace acceleration techniques is presented. In Section~\ref{sec:test} we first discuss numerical results for a test case to demonstrate the improvements that can be attained by the new algorithm in a somewhat idealistic setting. In Section~\ref{sec:GBS} our algorithm is applied to large-scale turbulent simulation of plasma in a tokamak, showing a significant reduction of computational time.

\section{Algorithm} \label{sec:algorithms}

The algorithm proposed in this work for accelerating the solution of the sequence of linear systems~\eqref{eq:seq} uses randomized techniques to lower the cost of a POD-based strategy, such as the one proposed in~\cite{Markovinovic2006}.
Recall that we aim at solving the linear systems $A(t_{i}) \boldsymbol{x}(t_{i}) = \boldsymbol{b}(t_{i})$ consecutively for $i = 0,1,\cdots$. We make no assumption on the symmetry of $A(t_{i}) \in \mathbb{R}^{n \times n}$ and thus GMRES is an appropriate choice for solving each linear system.
Supposing that, at the $i$th timestep, $M$ previous solutions are available, we arrange them into the 
history matrix 
\[
 X = \left[\boldsymbol{x}(t_{i-M})\,|\,\cdots \,|\,\boldsymbol{x}(t_{i-1})\right] \in \mathbb R^{n\times M}.
\]
where the notation on the right-hand side indicates the concatenation of columns.
Instead of using the complete history, which may contain redundant information, one usually selects a subspace $\mathcal{S} \subset \Span(X)$ of lower dimension $m\le M$. Then, the initial guess for the $i$th linear system is obtained from choosing the element of $\mathcal{S}$ that minimizes the residual:
\[
\min\limits_{s \in \mathcal S} 
\| A(t_{i})\boldsymbol{s} - \boldsymbol{b}(t_{i})\|_2 = 
\min\limits_{\boldsymbol{z} \in \mathbb{R}^{m}}\| A(t_{i})Q \boldsymbol{z} - \boldsymbol{b}(t_{i})\|_2,
\]
where the columns of $Q \in \mathbb R^{n\times m}$ contain an orthonormal basis of $\mathcal S$. We use $\|\cdot \|_2$ to denote the Euclidean norm for vectors and the spectral norm for matrices.
The described approach is summarized in Algorithm~\ref{alg:1}, which is a template that needs to be completed by an appropriate choice of the subspace $\mathcal{S}$, in Sections~\ref{sec:POD} and~\ref{sec:RRF}.

\begin{algorithm}
\linespread{1.3}\selectfont 
    \caption{Solution of $i$th linear system $ A(t_{i}) \boldsymbol{x}(t_{i}) = \boldsymbol{b}(t_{i})$}
    \label{alg:1}
    \begin{algorithmic}[1]
    \Require History of $M$ solutions $\left\{\boldsymbol{x}(t_{i-M}),\cdots , \boldsymbol{x}(t_{i-1})\right\} $
    
   \State $X = \left[\boldsymbol{x}(t_{i-M})\,|\,\cdots \,|\,\boldsymbol{x}(t_{i-1})\right]$

    \State Generate $Q \gets$ orthonormal basis for $\mathcal S \subseteq \Span(X)$, $\dimension(\mathcal S)=m \leq M$
    \State Compute $\boldsymbol{s}^{\star} =   \argmin\limits_{\boldsymbol{z} \in \mathbb{R}^{m}}\| A(t_{i}) Q \boldsymbol{z} - \boldsymbol{b}(t_{i})\|_2 \in \mathcal{S} $ 
    \State Solve $A(t_{i}) \boldsymbol{x}(t_{i}) = \boldsymbol{b}(t_{i})$ using GMRES with initial guess $\boldsymbol{s}^{\star} \in \mathcal{S}$
    \end{algorithmic}
    \end{algorithm}
    
If the complete history is used, $\mathcal S = \Span(X)$, then computing $Q$ via a QR decomposition~\cite{Golub2013}, as required in Step 2, costs $\mathcal{O}(M^2 n)$ operations.
In addition, setting up the linear least-squares problem in Step 3 of Algorithm~\ref{alg:1} requires $M$ (sparse) matrix-vector products in order to compute $A(t_{i}) Q$. The standard approach for solving the linear least-squares problem proceeds through the QR decomposition of that matrix and costs another $\mathcal{O}(M^3 + M^2 n)$ operations. This strong dependence of the cost on $M$ effectively forces a rather small choice of $M$,  neglecting relevant components of the solutions that could be contained in older solutions only. In the following, we discuss two strategies to overcome this problem.

\subsection{Proper Orthogonal Decomposition}
\label{sec:POD}

An existing strategy~\cite{Markovinovic2006} to arrive at a low-dimensional subspace $\mathcal{S} \subset \Span(X)$ uses a POD approach~\cite{Kunisch1999} and computes the orthonormal basis $Q$ for $\mathcal{S}$ through a truncated SVD (Singular Value Decomposition) of $X$; see Algorithm~\ref{alg:2}. Note that only the first $m$ left singular vectors $\boldsymbol{\Psi}_{1}, \cdots, \boldsymbol{\Psi}_{m}$ need to be computed in Step 2. 

\begin{algorithm}[H]
\linespread{1.3}\selectfont 
    \caption{Method 1 (POD) to generate basis $Q = Q_{\mathsf{POD}}$}\label{alg:2}
    \begin{algorithmic}[1]
        \Require History of $M$ solutions $\left\{\boldsymbol{x}(t_{i-M}) ,\cdots , \boldsymbol{x}(t_{i-1}) \right\}$
             \State $X = \left[\boldsymbol{x}(t_{i-M})\,|\,\cdots \,|\,\boldsymbol{x}(t_{i-1})\right]$
        \State  Compute SVD of $X$:  $\left[ \Psi , \Sigma , \Phi \right] =  \text{svd}(X)$
        
        \State $Q_{\mathsf{POD}} =  \left[ \boldsymbol{\Psi}_{1}   |\,\cdots \,| \boldsymbol{\Psi}_{m}\right] \in \mathbb{R} ^{n \times m}$
    \end{algorithmic}
\end{algorithm}

Thanks to basic properties of the SVD, the basis $Q_{\mathsf{POD}}$ enjoys the following optimality property~\cite{Volkwein2013}:
\begin{equation}
 \label{eq:optpod}
 \| (I - Q_{\mathsf{POD}} Q_{\mathsf{POD}}^{T}) X \|_{F}^{2} = \sum_{k=m+1}^{M} \sigma_{k}^{2} = \min_{Q \in \R^{n\times n} \atop Q^T Q = I_m} \| (I - Q Q^{T}) X \|_{F}^{2},
\end{equation}
where $\|\cdot\|_F$ denotes the Frobenius norm and $\sigma_1 \ge \sigma_2 \ge \cdots \ge \sigma_M \ge 0$ are the singular values of $X$. In words, the choice $Q_{\mathsf{POD}}$ minimizes the error of orthogonally projecting the columns of $X$ onto an $m$--dimensional subspace.
The relation to the singular values of $X$ established in~\eqref{eq:optpod} also allows one to choose $m$ adaptively, e.g., by choosing $m$ such that 
most of the variability in the history matrix $X$ is captured. 

At every time step, the history matrix $X$ gets modified by removing its first column and appending a new last column. The most straightforward implementation of Algorithm~\ref{alg:2} would compute the SVD needed of Step 2 from scratch at every time step, leading to a complexity of $\mathcal O(nM^2)$ operations. In principle, SVD updating techniques, such as the ones presented in~\cite{Brand2006} and~\cite{Chen2023}, could be used to reduce this complexity 
to $O(mn + m^3)$ for every time step. However, in the context of our application, there is no need to update a complete SVD (in particular, the right singular vectors are not needed) and the randomized techniques discussed in the next section seem to be preferable.

\subsection{Randomized range finder}
\label{sec:RRF}
In this section, an alternative to the POD method (Algorithm~\ref{alg:1}) for generating the low-dimensional subspace $\mathcal{S} \subset \Span(X)$ is presented, relying on randomized techniques. The randomized SVD from~\cite{Halko2011} applied to the $n\times M$ history matrix $X$ proceeds as follows. First, we draw an $M\times m$ Gaussian random matrix $Z$, that is, the entries of $Z$ are independent and identically distributed (i.i.d) standard normal variables. Then the so-called \emph{sketch}
\begin{equation} \label{eq:currenttimestep}
\Omega = X Z = \left[ \boldsymbol{x}(t_{i-M})\,|\, \cdots \,|\, \boldsymbol{x}(t_{i-1}) \right] Z
\end{equation}
is computed, followed by a reduced QR decomposition $\Omega = QR$. This only involves the $n\times m$ matrix $\Omega$, which for $m\ll M$ is a significant advantage compared to Algorithm~\ref{alg:2}, which requires the SVD of an $n\times M$ matrix. The described procedure is contained in lines 2--4 and 11 of Algorithm~\ref{alg:4} below.

According to~\cite[Theorem~10.5]{Halko2011}, the expected value (with respect to $Z$) of the error returned by the randomized SVD satisfies
    \begin{equation} \label{eq:errorrandsvd}
        \mathbb{E} \| (I- QQ^{T})X \|_{F} \leq \Big(1+ \frac{r}{p-1}\Big)^{1/2} \Big(\sum_{k > r} \sigma_{k}^{2}\Big)^{1/2}, 
    \end{equation}
where we partition $m = r + p$ for a small oversampling parameter $p\ge 2$. Also, the tail bound from~\cite[Theorem~10.7]{Halko2011} implies that it is highly unlikely that the error is much larger than the upper bound~\eqref{eq:errorrandsvd}. Comparing~\eqref{eq:errorrandsvd} with the error~\eqref{eq:optpod}, we see that the randomized method is only a factor $\sqrt{2}$ worse than the optimal basis of roughly \emph{half} the size produced by POD. As we  also see in our experiments of Section~\ref{sec:test}, this bound is quite pessimistic and usually the randomized SVD performs nearly as good as POD using bases of the \emph{same} size.

\begin{algorithm}[H]
    \linespread{1.3}\selectfont 
        \caption{Method 2 (Randomized Range Finder) to generate basis $Q$}\label{alg:4} 
        \begin{algorithmic}[1]
    \Require{History of $M$ solutions $\left\{\boldsymbol{x}(t_{i-M}) ,\cdots , \boldsymbol{x}(t_{i-1}) \right\}$.  }
    \Statex Optional: $\boldsymbol{x}(t_{i-M-1})$, matrices $\Omega$ and $Z$ from previous time step (see~\eqref{eq:previoustimestep})
            \If{$\Omega$ is computed from scratch}
               \State $X = \left[\boldsymbol{x}(t_{i-M})  |\,\cdots \,| \boldsymbol{x}(t_{i-1})\right] \in \mathbb{R} ^{n \times M}$
                \State Draw Gaussian random matrix $Z = \left[\boldsymbol{z}_{1} |\,\cdots \,| \boldsymbol{z}_{M} \right]^{T} \in \mathbb{R}^{M  \times m }$
                \State $ \Omega = XZ \in \mathbb{R}^{n  \times m }$
            \Else 
                \State $\Omega  =  \Omega - \boldsymbol{x}(t_{i-M-1})\boldsymbol{z}_1$  
                \Comment{$\Omega$ \textit{is updated} }
            \State $ \boldsymbol{z}_k = \boldsymbol{z}_{k+1} \quad k = 1, \cdots, M-1$
            \State Draw new Gaussian random vector $\boldsymbol{z}_M \in \mathbb{R}^{m}$
                \State $\Omega  =   \Omega +\boldsymbol{x}(t_{i-1})\boldsymbol{z}_M^{T}$ \Comment{$\Omega$ \textit{is updated} }
            \EndIf
            \State $\left[ Q, R \right] = $reduced QR of $\Omega$
        \end{algorithmic}
    \end{algorithm}
Instead of performing the randomized SVD from scratch in every timestep, one can easily exploit the fact that only a small part of the history matrix is modified. To see this, let us consider the sketch from the previous timestep:
\begin{equation} \label{eq:previoustimestep}
\Omega_{\mathsf{prev}} = \left[ \boldsymbol{x}(t_{i-M-1})\,|\, \cdots \,|\, \boldsymbol{x}(t_{i-2}) \right] Z^{\mathsf{prev}}.
\end{equation}
Comparing with~\eqref{eq:currenttimestep}, we see that the sketch $\Omega$ of the current timestep is obtained by removing the contribution from the solution $\boldsymbol{x}(t_{i-M-1})$ and adding the contribution of $\boldsymbol{x}(t_{i-1})$. The removal is accomplished in line 6 of Algorithm~\ref{alg:4} by a rank-one update:
\[
 \Omega_{\mathsf{prev}} - \boldsymbol{x}(t_{i-M-1}) \boldsymbol{z}^{\mathsf{prev}}_{1}=  \left[ \boldsymbol{0}\,|\,\boldsymbol{x}(t_{i-M})\,|\,   \cdots \,|\, \boldsymbol{x}(t_{i-2}) \right] Z^{\mathsf{prev}}.
\]
By a cyclic permutation, we can move the zero column to the last column,
$\left[ \boldsymbol{x}(t_{i-M})\,|\,   \cdots \,|\, \boldsymbol{x}(t_{i-2}) \,|\, \boldsymbol{ 0}\right]$, updating $Z$ as in line 7 of Algorithm~\ref{alg:4}. Finally, the contribution of the latest solution is incorporated by adding the rank-one matrix $\boldsymbol{x}(t_{i-1}) \boldsymbol{z}_M^T$, where $\boldsymbol{z}_M$ $ \in \R^m$ is a newly generated Gaussian random vector that is stored in the last row of $Z$. Under the (idealistic) assumption that all solutions are exactly computed (and hence deterministic), the described progressive updating procedure is mathematically equivalent to computing the randomized SVD from scratch. In particular, the error bound~\eqref{eq:errorrandsvd} continues to hold.

Lines 6--9 of Algorithm~\ref{alg:4} require $\mathcal O(nm)$ operations. When using standard updating procedures for QR decomposition~\cite{Golub2013}, line 11 has the same complexity. This compares favorably with the $\mathcal O(nM^2)$ operations needed by Algorithm~\ref{alg:2} per timestep.

When performing the progressive update of $\Omega$ over many timesteps, one can encounter numerical issues due to numerical cancellation in the repeated subtraction and addition of contributions to the sketch matrix. To avoid this, the progressive update is carried out only for a fixed number of timesteps, after which a new random matrix $Z$ is periodically generated and $\Omega$ is computed from scratch. 

\section{Convergence Analysis}
\label{sec:theory}

We start our convergence analysis of the algorithms from the preceding section by considering analytical properties of the history matrix $X = [\boldsymbol{x}(t_{i-M})\,|\,\cdots \,|\,\boldsymbol{x}(t_{i-1})]$. After reparametrization, we may assume without loss of generality that each of the past timesteps is contained in the interval $[-1,1]$:
\[
 -1 = t_{i-M} < \cdots < t_{i-1} = 1.
\]
For notational convenience, we define 
\begin{equation} \label{def:X}
  X \equiv X(\bt) := \left[ \boldsymbol{x}(t_{i-M})\,|\, \cdots\,|\,\boldsymbol{x}(t_{i-1}) \right], \quad \bt = \left[
  t_{i-M}, \cdots,t_{i-1} \right],
\end{equation}
where $\boldsymbol{x}(t)$ satisfies the (parametrized) linear system
\begin{equation}
    A(t) \boldsymbol{x}(t) = \boldsymbol{b}(t), \quad A:[-1,1] \to \R^{n \times n}, \quad \boldsymbol{b} : 
    [-1,1] \to \R^{n},
    \label{eq:parsys}
\end{equation}
that is, each entry of $A$ and $\boldsymbol{b}$ is a scalar function on the interval $[-1,1]$. Indeed, for the convergence analysis, we assume that each linear system of the sequence in~\eqref{eq:seq} is obtained by sampling the parametrized system in~\eqref{eq:parsys} in $t_i \in [-1,1]$. 
In many practical applications, like the one described in Section~\ref{sec:GBS}, the time dependence in~\eqref{eq:parsys} arises from time-dependent coefficients in the underlying PDEs. Frequently, this dependence is real analytic, which prompts us to make the following smoothness assumption on $A$, $\boldsymbol b$.
\begin{assumption} \label{assume:analyticity}
Consider the open Bernstein ellipse $E_{\rho} \subset \mathbb C$ for $\rho > 1$, that is, the open ellipse with foci $\pm 1$ and semi-minor/-major axes summing up to $\rho$.
We assume that $A : \left[  -1, 1  \right] \to \mathbb{C}^{n \times n}$ and $ \boldsymbol{b} : \left[  -1, 1  \right] \to \mathbb{C}^{n}$ admit extensions that are analytic on $E_{\rho}$ and continuous on $\bar{E_{\rho}}$ (the closed Bernstein ellipse), such that $A(t)$ is invertible for all $t\in \bar{E_{\rho}}$. In particular, this implies that $\boldsymbol{x}(t) = A^{-1}(t) \boldsymbol{b}(t)$ is analytic on  $E_{\rho}$ and $\kappa_\rho := \max_{t \in \partial E_{\rho}} \|\boldsymbol{x}(t) \|_2$ is finite.
\end{assumption}

\subsection{Compressibility of the solution time history}

The effectiveness of POD-based algorithms relies on the compressibility of the solution history, that is, the columns of $X$ can be well approximated by an $m$--dimensional subspace with $m \ll M$. According to~\eqref{eq:optpod}, this is equivalent to stating that the singular values of $X$ decrease rapidly to zero.
Indeed, this property is implied by Assumption~\ref{assume:analyticity} as shown by the 
following result, which was stated in~\cite{Kressner2011} in the context of low-rank methods for solving parametrized linear systems. 
\begin{theorem}[\protect{\cite[Theorem~2.4]{Kressner2011}}]
    \label{thm:1}
    Under Assumption~\ref{assume:analyticity}, the $k$th largest singular value $\sigma_k$ of the history matrix $X(\bt)$ from~\eqref{def:X} satisfies
    \[
        \sigma_{k} \leq \frac{2 \rho \kappa_\rho \sqrt{M}}{1 - \rho^{-1}}   \rho^{-k}.
    \]
\end{theorem}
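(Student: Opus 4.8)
The plan is to exploit the analyticity of $\boldsymbol{x}(t)$ on the Bernstein ellipse $E_\rho$ to construct, for each $k$, an explicit rank-$(k-1)$ approximation of $X(\bt)$ with controlled error; by the Eckart--Young characterization (equivalently, the right-hand side of~\eqref{eq:optpod}), this bounds $\sigma_k$. The natural low-rank approximant comes from truncated polynomial interpolation: if $p_{k-1}(t)$ is the degree-$(k-1)$ interpolant (or a near-best polynomial approximant) of $\boldsymbol{x}(t)$ on $[-1,1]$, then $p_{k-1}(t) = \sum_{j=0}^{k-1} \boldsymbol{c}_j\, t^j$ lies in a subspace of dimension at most $k$ spanned by the vector coefficients $\boldsymbol{c}_0,\dots,\boldsymbol{c}_{k-1}$, so the matrix $\widehat{X} := [p_{k-1}(t_{i-M})\,|\,\cdots\,|\,p_{k-1}(t_{i-1})]$ has rank at most $k$. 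Wait — to bound $\sigma_k$ (the $k$th singular value) we want a rank-$(k-1)$ approximant, so I would instead take the degree-$(k-2)$ approximant, or simply reindex; the cleanest route is the polynomial approximant of degree $k-1$ to bound $\sigma_{k+1}$ and then shift indices, but since the paper tolerates a factor $2\rho/(1-\rho^{-1})$ in front, a slightly lossy indexing is harmless. Let me keep it as: use a degree-$(k-1)$ approximant $p$, giving $\mathrm{rank}(\widehat X)\le k$, which bounds $\sigma_{k+1}$; then re-run with $k\mapsto k-1$ to get the stated $\sigma_k \le C\rho^{-(k-1)} \le C\rho\cdot\rho^{-k}$, absorbing the extra $\rho$.

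Concretely, the key steps in order are: (i) Invoke the standard scalar result that a function analytic on $\bar E_\rho$ with modulus bounded by $\kappa_\rho$ on $\partial E_\rho$ is approximated on $[-1,1]$ by polynomials of degree $d$ with error at most $\frac{2\kappa_\rho}{1-\rho^{-1}}\rho^{-d}$ (Chebyshev-truncation / Bernstein estimate). Apply this componentwise to $\boldsymbol{x}(t)$: for each coordinate $\ell$, there is a scalar polynomial $q_\ell$ of degree $k-1$ with $\sup_{t\in[-1,1]}|x_\ell(t) - q_\ell(t)| \le \frac{2\kappa_\rho}{1-\rho^{-1}}\rho^{-(k-1)}$; stacking gives a vector polynomial $\boldsymbol{p}(t) = (q_1(t),\dots,q_n(t))^T$ of degree $k-1$ with $\sup_{t\in[-1,1]}\|\boldsymbol{x}(t)-\boldsymbol{p}(t)\|_2 \le \frac{2\kappa_\rho}{1-\rho^{-1}}\rho^{-(k-1)}$ — but to get the $\|\cdot\|_2$ bound cleanly one should instead apply the scalar estimate to the projection $\langle \boldsymbol{x}(t),\boldsymbol{u}\rangle$ for arbitrary unit $\boldsymbol{u}$, or just note $\kappa_\rho$ already bounds $\|\boldsymbol{x}(t)\|_2$ on $\partial E_\rho$ and run the contour-integral (Chebyshev coefficient) argument directly with the vector-valued Cauchy integral, which works verbatim since norms pass through integrals. (ii) Form $\widehat X$ by sampling $\boldsymbol{p}$ at the $M$ nodes; since $\boldsymbol p$ has degree $\le k-1$, $\mathrm{rank}(\widehat X)\le k$. (iii) Bound $\|X - \widehat X\|_F^2 = \sum_{j} \|\boldsymbol{x}(t_{i-M+j}) - \boldsymbol{p}(t_{i-M+j})\|_2^2 \le M\big(\frac{2\kappa_\rho}{1-\rho^{-1}}\rho^{-(k-1)}\big)^2$, hence $\|X-\widehat X\|_F \le \sqrt{M}\cdot\frac{2\kappa_\rho}{1-\rho^{-1}}\rho^{-(k-1)}$. (iv) Since $\widehat X$ has rank $\le k$, the Eckart--Young theorem (or directly~\eqref{eq:optpod} with the best rank-$k$ approximation) gives $\sigma_{k+1}(X) \le \|X - \widehat X\|_F$... and then reindexing $k\to k-1$ and using $\rho^{-(k-2)} = \rho\cdot\rho^{-(k-1)} \le \rho^2\cdot\rho^{-k}$ yields $\sigma_k \le \frac{2\rho^2\kappa_\rho\sqrt M}{1-\rho^{-1}}\rho^{-k}$ — which is a factor $\rho$ looser than claimed; to recover exactly the stated constant $\frac{2\rho\kappa_\rho\sqrt M}{1-\rho^{-1}}$ I would instead bound $\sigma_k$ directly by the rank-$(k-1)$ approximant (degree $k-2$ polynomial), getting $\sigma_k \le \sqrt M\cdot\frac{2\kappa_\rho}{1-\rho^{-1}}\rho^{-(k-2)} = \frac{2\rho^2\kappa_\rho\sqrt M}{1-\rho^{-1}}\rho^{-k}$; the discrepancy with the paper's stated $\rho$ (vs.\ my $\rho^2$) suggests the scalar estimate used there is the sharper $\frac{C}{1-\rho^{-1}}\rho^{-d}$ applied with a slightly different normalization, so I would simply cite the precise form of the scalar bound from the Trefethen/Demanet--Townsend references and match constants at the end.

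The main obstacle I anticipate is purely bookkeeping: getting the vector-valued version of the scalar polynomial-approximation estimate with the right constant. The essential content — analyticity $\Rightarrow$ exponential decay of polynomial approximation error $\Rightarrow$ exponential decay of singular values via Eckart--Young — is robust, but the precise prefactor $\frac{2\rho}{1-\rho^{-1}}$ depends on which scalar estimate one plugs in (truncated Chebyshev series vs.\ interpolation at Chebyshev points vs.\ the raw Cauchy-integral bound on Chebyshev coefficients $|a_d| \le 2\kappa_\rho\rho^{-d}$, summed as a geometric series). I would structure the write-up so that the scalar lemma (with explicit constant) is quoted as a black box, the vector-valued upgrade is one line (replace $|\cdot|$ by $\|\cdot\|_2$ throughout the contour integral, since $\boldsymbol{x}$ is $\mathbb{C}^n$-valued analytic and $\|\int f\| \le \int\|f\|$), and the singular-value conclusion is one more line via~\eqref{eq:optpod}. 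Everything else — the $\sqrt M$ from Frobenius-vs-columnwise, the geometric-series summation, the reindexing — is routine.
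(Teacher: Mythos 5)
Your approach---truncate the Chebyshev expansion of $\boldsymbol{x}(t)$ to obtain an explicit low-rank approximant of $X$, then invoke Eckart--Young---is precisely the argument behind the cited result (the paper itself only quotes \cite[Theorem~2.4]{Kressner2011} and does not reprove it), and it is correct in substance. The factor-of-$\rho$ slack you flag disappears if you bound the truncation error by the coefficient tail sum directly rather than through the generic degree-$d$ estimate: keeping the Chebyshev terms of degree $0,\dots,k-2$ yields a rank-$(k-1)$ matrix $\widehat{X}$ with $\|\boldsymbol{x}(t)-\boldsymbol{p}(t)\|_2 \le \sum_{j\ge k-1}\|\boldsymbol{c}_j\|_2 \le 2\kappa_\rho\sum_{j\ge k-1}\rho^{-j} = \frac{2\kappa_\rho\,\rho}{1-\rho^{-1}}\rho^{-k}$ for $t\in[-1,1]$ (since $|q_j(t)|\le 1$ there), and multiplying by $\sqrt{M}$ and applying Eckart--Young reproduces the stated constant exactly; the extra $\rho$ in your version comes from writing the degree-$d$ error as $\frac{2\kappa_\rho}{1-\rho^{-1}}\rho^{-d}$ instead of the sharper $\frac{2\kappa_\rho}{\rho-1}\rho^{-d}$, which is the form appearing in the paper's Lemma~\ref{thm:cheb}.
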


Combined with~\eqref{eq:optpod}, Theorem~\ref{thm:1} implies that
the POD basis $Q_{\mathsf{POD}} \in \mathbb R^{n\times m}$ satisfies the error bound
\[
    \| (I - Q_{\mathsf{POD}} Q_{\mathsf{POD}}^{T}) X \|_{F}^{2} \leq \frac{4 \rho^{2} \kappa_\rho^{2} M}{(1 - \rho^{-1})^{2}}   (\rho^{-(m+1)} - \rho^{-(M+1)}).
\]

\subsection{Quality of prediction without compression}

Algorithm~\ref{alg:1} determines the initial guess $\boldsymbol{s}^*$ for the next time step $t_{i} > t_{i-1} = 1$  by solving the minimization problem 
\begin{equation} \label{eq:minrecall} 
 \boldsymbol{s}^* = \argmin_{\boldsymbol{s}\in \mathcal S} \|A(t_{i}) \boldsymbol{s} - \boldsymbol{b}(t_{i})\|_2.
\end{equation}
In this section, we will assume, additionally to Assumption~\eqref{assume:analyticity}, that $\mathcal S = \Span(X(\bt))$, that is, $X(\bt)$ is not compressed. Our analysis focuses on uniform timesteps $\tequi = \left[ t_{i-M},\cdots, t_{i-1}\right]$ defined by
\[
 t_{i-M} = -1,\ t_{i-M+1} = -1+\Delta t,\ \cdots,\ t_{i-2} = 1-\Delta t,\ t_{i-1} = 1, \quad \Delta t  = 2/(M-1).
\]
Note that the next timestep $t_{i} = 1 + \Delta t$ satisfies $t_{i} \in E_\rho$ if and only if $\rho > t_{i} + \sqrt{t_{i}^2-1} \approx 1 + \sqrt{2 \Delta t}$. The following result shows how the quality of the initial guess rapidly improves (at a square root exponential rate, compared to the exponential rate of Theorem~\ref{thm:1}) as $M$, the number of previous time steps in the history, increases. 

\begin{theorem} \label{theorem:main}
Under Assumption~\eqref{assume:analyticity}, the initial guess constructed by Algorithm~\ref{alg:1} with $\mathcal S = \Span(X)$ satisfies the error bound
    \[
        \| A(t_{i}) \boldsymbol{s}^{*} - \boldsymbol{b}(t_{i})\|_2 \leq 2\| A(t_{i})\|_2 \kappa_\rho    \Big[  \frac{1}{1-r}+ 
            \frac{C(M,R) \rho}{ (\rho -1)\sqrt{\rho^2 r^2 -1}} \Big] r^{R+1},
    \]
    with $C(M,R) = 5 \sqrt{5} \sqrt{2R+1} \sqrt{M} / \sqrt{2(M-1)}$, for any $R \leq \frac{1}{2} \sqrt{M-1}$, $r = (t_{i} + \sqrt{t_{i}^{2} -1})/ \rho < 1$.
    \label{thm:res}
\end{theorem}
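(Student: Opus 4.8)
The plan is to reduce the vector-valued residual bound to a scalar polynomial extrapolation estimate applied component-wise (or more precisely, to the analytic vector-valued function $\boldsymbol{x}(t)$ directly). Since $\mathcal{S}=\Span(X)$ contains every vector of the form $p(t_i)$ obtained by evaluating at $t_i$ a polynomial interpolant through the data $(\,t_{i-M+j},\boldsymbol{x}(t_{i-M+j})\,)$ — because such an interpolant, as a function of the abscissa, produces at $t_i$ a fixed linear combination $\sum_j \ell_j(t_i)\boldsymbol{x}(t_{i-M+j})$ of the columns of $X$ — the minimum residual is bounded by the residual of \emph{any} particular such candidate. Concretely, for any candidate $\tilde{\boldsymbol{s}}\in\mathcal{S}$ we have
\[
 \|A(t_i)\boldsymbol{s}^* - \boldsymbol{b}(t_i)\|_2 \le \|A(t_i)\tilde{\boldsymbol{s}} - \boldsymbol{b}(t_i)\|_2 = \|A(t_i)(\tilde{\boldsymbol{s}}-\boldsymbol{x}(t_i))\|_2 \le \|A(t_i)\|_2\,\|\tilde{\boldsymbol{s}}-\boldsymbol{x}(t_i)\|_2,
\]
so it suffices to exhibit one $\tilde{\boldsymbol{s}}\in\mathcal{S}$ with $\|\tilde{\boldsymbol{s}}-\boldsymbol{x}(t_i)\|_2$ small. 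This is where the Demanet–Townsend stabilized extrapolation construction enters: I would take $\tilde{\boldsymbol{s}}$ to be the value at $t_i$ of the degree-$R$ stabilized least-squares polynomial fit to the equispaced samples $\boldsymbol{x}(\tequi)$, which lies in $\mathcal{S}$ since it is a linear combination of the columns of $X$ (the fitting map is linear in the data, evaluated at a fixed node).

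The heart of the argument is then the extrapolation error bound: for a scalar function analytic on $\bar E_\rho$ with $\max_{\partial E_\rho}|f|\le \kappa$, the degree-$R$ stabilized least-squares extrapolant $f_R$ built from $M$ equispaced samples on $[-1,1]$ satisfies, for $t_i = 1+\Delta t$ written in "Bernstein coordinates" $r = (t_i+\sqrt{t_i^2-1})/\rho$,
\[
 |f_R(t_i) - f(t_i)| \lesssim \kappa\,\Big[\tfrac{1}{1-r} + \tfrac{C(M,R)\,\rho}{(\rho-1)\sqrt{\rho^2r^2-1}}\Big] r^{R+1},
\]
valid for $R\le\tfrac12\sqrt{M-1}$. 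I would obtain the vector version by applying this to $\boldsymbol{x}(t)$ viewed through an arbitrary unit functional, or equivalently by noting that the best-polynomial-approximation and Chebyshev-coefficient-decay estimates underlying Demanet–Townsend (Bernstein's theorem: the degree-$R$ truncated Chebyshev series of $f$ has error $\le \tfrac{2\kappa}{\rho-1}\rho^{-R}$, and Chebyshev coefficients decay like $2\kappa\rho^{-k}$) all carry over verbatim with $|\cdot|$ replaced by $\|\cdot\|_2$ and $\kappa$ replaced by $\kappa_\rho = \max_{\partial E_\rho}\|\boldsymbol{x}(t)\|_2$ — every step in their proof is a triangle inequality or a linear operation, hence norm-compatible. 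The two terms in the bracket correspond to (i) the truncation error of replacing $\boldsymbol{x}$ by its degree-$R$ Chebyshev partial sum, propagated to $t_i$ via the growth of Chebyshev polynomials $\|T_k(t_i)\| \le \tfrac12(\rho r)^{... }$-type estimates, giving the $r^{R+1}/(1-r)$ piece, and (ii) the error of the least-squares fit itself (stability of the equispaced least-squares operator, controlled by $C(M,R)$ under the condition $R\le\tfrac12\sqrt{M-1}$, which is precisely the regime where the equispaced least-squares matrix is well-conditioned), propagated likewise, giving the $C(M,R)r^{R+1}$ piece. Assembling, multiplying through by $\|A(t_i)\|_2$, and tracking the factor $2$ from the Chebyshev-coefficient bound yields the stated inequality.

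The main obstacle is carrying out step (ii) cleanly: I need the precise stability constant of the equispaced polynomial least-squares operator and its interaction with the analytic extension. Demanet and Townsend establish (in the scalar case) that with $M$ equispaced nodes and degree $R\le\tfrac12\sqrt{M-1}$, the least-squares fit is a bounded perturbation of Chebyshev truncation with an explicitly controlled operator norm; the work is to verify that their constant is exactly the $C(M,R)=5\sqrt5\sqrt{2R+1}\sqrt{M}/\sqrt{2(M-1)}$ appearing here and that the extrapolation-to-$t_i$ factor contributes the $\rho/((\rho-1)\sqrt{\rho^2r^2-1})$ term (the $\sqrt{\rho^2r^2-1}$ denominator comes from summing a geometric-type series $\sum_k (\rho r)^{-k}$ weighted by Chebyshev growth at $t_i$, which converges exactly because $\rho r = t_i+\sqrt{t_i^2-1}>1$). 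Everything else — the reduction via $\|A(t_i)\|_2$, the inclusion $\tilde{\boldsymbol{s}}\in\mathcal{S}$, the vectorization — is routine. I would therefore structure the proof as: (1) reduce to bounding $\|\tilde{\boldsymbol{s}}-\boldsymbol{x}(t_i)\|_2$ via the minimality of $\boldsymbol{s}^*$; (2) choose $\tilde{\boldsymbol{s}}$ as the stabilized equispaced least-squares extrapolant and confirm it lies in $\Span(X)$; (3) invoke the vector-valued extension of the Demanet–Townsend bound (proved by the norm-compatible reading of their argument, using $\kappa_\rho$ and Bernstein's theorem on $E_\rho$); (4) multiply by $\|A(t_i)\|_2$ and collect constants.
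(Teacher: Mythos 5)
Your proposal is correct and follows essentially the same route as the paper: reduce via the minimality of $\boldsymbol{s}^{*}$ to bounding $\|\tilde{\boldsymbol{s}}-\boldsymbol{x}(t_i)\|_2$ for the degree-$R$ equispaced least-squares Chebyshev fit (which lies in $\Span(X)$ since it is $X(\tequi)Q_R(\tequi)^{\dagger}\boldsymbol{q}_R(t_i)$), then extend the Demanet--Townsend analysis to the vector-valued setting using the $2\kappa_\rho\rho^{-k}$ Chebyshev coefficient decay, the growth $|q_k(t_i)|\le(\rho r)^k$, and the lower bound $\sigma_{\min}(Q_R(\tequi))\ge \frac{\sqrt 2}{5\sqrt 5}\frac{\sqrt{M-1}}{\sqrt{2R+1}}$ for $R\le\frac12\sqrt{M-1}$, which is precisely where $C(M,R)$ and the $\sqrt{\rho^2r^2-1}$ denominator come from. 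All the ingredients you identify, including the split into truncation error and least-squares stability error, match the paper's proof.
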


\subsubsection{Proof of Theorem~\ref{theorem:main}}
\label{sec:proofthm}
The rest of this section is concerned with the proof of Theorem~\ref{theorem:main}. We establish the result by making a connection to vector-valued polynomial extrapolation and extending results by Demanet and  Townsend~\cite{Demanet2019a} on polynomial extrapolation to the vector-valued setting.

Let $\PP_{R} \subset \R^n[t]$ denote the subspace of vector-valued polynomials of length $n$ and degree at most $R$ for some $R \le M-1$. We recall that any $\textbf{v}  \in \PP_{R}$ takes the form $\textbf{v}(t) = \textbf{v}_0 +  \textbf{v}_1 t + \cdots + \textbf{v}_R t^R$ for constant vectors $\textbf{v}_0, \cdots,  \textbf{v}_R \in \R^n$. Equivalently, each entry of $\textbf{v}$ is a (scalar) polynomial of degree at most $R$.
In our analysis we consider vector-valued polynomials of the particular form
\begin{equation} \label{eq:particularp}
\boldsymbol{p}(t)= X(\tequi) \boldsymbol{y}(t),
\end{equation}
for a vector-valued polynomial $\boldsymbol{y}(t)$ of length $M$. A key observation is that the evaluation of $\boldsymbol{p}$ in the next timestep $t_{i}$ satisfies $\boldsymbol{p}(t_{i}) \in \Span(X(\tequi)) = \mathcal{S}$. According to~\eqref{eq:minrecall}, $\mathbf{s^*}$ minimizes the residual over $\mathcal{S}$. Hence, the residual can only increase when we replace $\mathbf{s^*}$ by $\boldsymbol{p}(t_{i})$ in
\begin{align}
\| A(t_{i}) \boldsymbol{s^{*}} - \boldsymbol{b}(t_{i}) \|_2 & 
    \leq   \| A(t_{i}) \boldsymbol{p}(t_{i}) - \boldsymbol{b}(t_{i})     \|_2 \nonumber \\
    &  \leq 
    \| A(t_{i})\|_2  \| \boldsymbol{p}(t_{i})- \boldsymbol{x} (t_{i})    \|_2.
    \label{eq:boundres}
\end{align}
Thus, it remains to find a polynomial of the form~\eqref{eq:particularp} for which we can establish convergence of the extrapolation error $ \| \boldsymbol{p}(t_{i})- \boldsymbol{x} (t_{i})    \|_2$. For this purpose, we will choose $\boldsymbol{p}_R \in \PP_{R}$ to be the least-squares approximation of the $M$ function samples contained in $X(\tequi)$: 
    \begin{equation}
              \boldsymbol{p}_{R} :=  \argmin_{\boldsymbol{p} \in \PP_{R}}\| X(\tequi) - P(\tequi)  \|_{F},
              \quad P(\tequi) = \left[ \bp(t_{i-M})\,|\, \cdots\,|\,\bp(t_{i-1}) \right].
              \label{eq:min}
    \end{equation}
    We will represent the entries of $\boldsymbol{p}_{R}$ in the Chebyshev polynomial basis:
    \begin{equation}
        \boldsymbol{p}_{R}(t) = q_{0}(t) \boldsymbol{c}_{0,p} + 
        q_{1}(t) \boldsymbol{c}_{1,p} + \cdots + q_{R}(t) \boldsymbol{c}_{R,p},
        \label{eq:pol}
    \end{equation}
    where $\boldsymbol{c}_{k,p} \in \R^n$ and $q_{k}$ denotes the 
    Chebyshev polynomial of degree $k$, that is,    
    $q_{k}(t) = \cos (k \cos^{-1}t)$ for $t\in [-1,1]$. Setting
     \begin{equation} \label{eq:chebpolynomial}
       C_{p} = \left[ \boldsymbol{c}_{0,p}|  \cdots |  \boldsymbol{c}_{R,p} \right]
 \in \mathbb{R}^{n \times (R+1)}, \quad 
 \boldsymbol{q}_{R}(t) = \left[ q_{0}(t), \cdots, q_{R}(t)  \right]^{T},
          \end{equation}
     we can express~\eqref{eq:pol} more compactly as $\boldsymbol{p}_{R}(t) = C_{p} \boldsymbol{q}_{R}(t)$. Thus,
     \[
      P_R(\tequi) = C_{p} Q_R(\tequi), \quad Q_R(\tequi) = \
    \left[ \boldsymbol{q}_{R}(t_1)| \cdots |\boldsymbol{q}_{R}(t_M) \right].
     \]
     In view of~\eqref{eq:min}, the matrix of coefficients
     $C_{p}$ is determined by minimizing $\| X(\tequi) - C_{p} Q_R(\tequi)\|_{F}$.
     Because $R \le M-1$, the matrix $Q_R(\tequi)$ has full row rank and thus the solution of this least-squares problem is given by $C_{p}  = X(\tequi)Q_{R}(\tequi)^{\dagger}$ with $Q_{R}(\tequi)^{\dagger} = Q_{R}(\tequi)^{T} (Q_{R}(\tequi) Q_{R}(\tequi)^{T})^{-1}$. In summary, we obtain that
\begin{equation} \label{eq:prt}
        \boldsymbol{p}_{R}(t) = C_{p}\boldsymbol{q}_{R}(t)  = X(\tequi) Q_{R}(\tequi)^{\dagger} \boldsymbol{q}_{R}(t),
    \end{equation}
    which is of the form~\eqref{eq:particularp}
    and thus contained in $\Span(X(\tequi))$, as desired.

    In order to analyze the convergence of $\boldsymbol{p}_{R}(t)$, we relate it to Chebyshev polynomial interpolation of $\bx$. The following lemma follows from classical approximation theory, see, e.g.,~\cite[Lemma~2.2]{Kressner2011}.
\begin{lemma}
    \label{thm:cheb}
            Let $\boldsymbol{q}_{R}(t) \in \R^{R+1}$ be defined as in~\eqref{eq:chebpolynomial},  containing the Chebyshev polynomials up to degree $R$. Under Assumption~\ref{assume:analyticity} there exists an approximation of the form 
            \[
            \boldsymbol{x}_{R}(t) = C_x \boldsymbol{q}_{R}(t), \quad C_x = \left[
            \bc_{0,x}, \bc_{1,x}, \cdots, \bc_{R,x} \right] \in \R^{n\times (R+1)},
            \]
such that 
        $
            \|\boldsymbol{c}_{k,x} \|_2 \leq 2 \kappa_{\rho}  \rho^{-k}
        $
        and
        \begin{equation*}
            \max_{ t \in \left[ -1,1\right]} \| \boldsymbol{x}_{R}(t) -\boldsymbol{x}(t) \|_2 \leq \frac{2\kappa_{\rho}}{\rho-1 } \rho^{-R}.
        \end{equation*}
    \end{lemma}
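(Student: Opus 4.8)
The plan is to take $\boldsymbol{x}_R$ to be the degree-$R$ truncation of the Chebyshev series of $\boldsymbol{x}$, so that the vectors $\boldsymbol{c}_{k,x}$ in the lemma are exactly the vector-valued Chebyshev coefficients of $\boldsymbol{x}$, and then to reduce both claimed estimates to the classical Bernstein-ellipse bound for the decay of Chebyshev coefficients of an analytic function, carried out directly in the Euclidean vector norm. First I would express the $k$th coefficient through the standard contour representation obtained from the Joukowski substitution $t=(z+z^{-1})/2$, which maps $|z|=\rho$ (and, by symmetry, $|z|=\rho^{-1}$) onto $\partial E_{\rho}$ and the annulus $\rho^{-1}<|z|<\rho$ onto $E_{\rho}\setminus[-1,1]$. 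In these coordinates $F(z):=\boldsymbol{x}\big((z+z^{-1})/2\big)$ is vector-valued, analytic on the annulus and continuous up to its closure by Assumption~\ref{assume:analyticity}, and $q_k\big((z+z^{-1})/2\big)=(z^k+z^{-k})/2$; hence $\boldsymbol{c}_{k,x}$ equals twice the Laurent coefficient of $z^{-k}$ of $F$, that is, $\boldsymbol{c}_{k,x}=\frac{1}{\pi i}\oint_{|z|=1}F(z)\,z^{k-1}\,dz$ for $k\ge 1$.

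The coefficient bound then follows by deforming the contour inward to $|z|=\rho^{-1}$, which is legitimate because $F$ is analytic in the annulus and continuous on its boundary. The key step is to keep the whole estimate in vector norm: the integral triangle inequality $\big\|\oint\,\cdot\,\big\|_2\le\oint\|\cdot\|_2$ gives $\|\boldsymbol{c}_{k,x}\|_2\le\frac{1}{\pi}\oint_{|z|=\rho^{-1}}\|F(z)\|_2\,|z|^{k-1}\,|dz|$. On $|z|=\rho^{-1}$ the argument $(z+z^{-1})/2$ lies on $\partial E_{\rho}$, so $\|F(z)\|_2\le\kappa_\rho$; combining this with $|z|^{k-1}=\rho^{-(k-1)}$ and the contour length $2\pi\rho^{-1}$ yields $\|\boldsymbol{c}_{k,x}\|_2\le 2\kappa_\rho\,\rho^{-k}$, exactly as claimed. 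For $k=0$ the same formula on $|z|=1$, together with the maximum-modulus principle for the subharmonic function $\|\boldsymbol{x}\|_2$ (whose maximum over $\bar E_{\rho}$ is attained on $\partial E_{\rho}$, the quantity defining $\kappa_\rho$), gives $\|\boldsymbol{c}_{0,x}\|_2\le\kappa_\rho\le 2\kappa_\rho$.

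The truncation bound is immediate from the coefficient decay. Since $|q_k(t)|\le 1$ for $t\in[-1,1]$, the triangle inequality and the bound just proved give, for every such $t$, $\|\boldsymbol{x}_R(t)-\boldsymbol{x}(t)\|_2\le\sum_{k=R+1}^{\infty}\|\boldsymbol{c}_{k,x}\|_2\le 2\kappa_\rho\sum_{k=R+1}^{\infty}\rho^{-k}=\frac{2\kappa_\rho}{\rho-1}\,\rho^{-R}$, the last equality being the geometric sum $\sum_{k>R}\rho^{-k}=\rho^{-R}/(\rho-1)$; the finiteness of this sum also records that the Chebyshev series converges, justifying the termwise manipulations. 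The main obstacle is not any single computation but carrying the classical one-dimensional argument (Bernstein-ellipse decay of Chebyshev coefficients) entirely in the Euclidean norm, so that the single scalar $\kappa_\rho$ controls every coefficient vector without a dimension-dependent factor $\sqrt{n}$; this is precisely what the integral triangle inequality and the maximum-modulus principle for $\|\boldsymbol{x}\|_2$ deliver, and it is the only place where care beyond the scalar theory is needed.
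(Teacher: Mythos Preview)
Your proposal is correct. The paper does not actually prove this lemma; it simply records that it ``follows from classical approximation theory'' and cites \cite[Lemma~2.2]{Kressner2011}. Your argument is precisely the standard Bernstein-ellipse proof underlying that citation---Joukowski substitution, identification of the Chebyshev coefficients with Laurent coefficients of $F(z)=\boldsymbol{x}((z+z^{-1})/2)$, contour deformation to $|z|=\rho^{-1}$, and a geometric tail sum---carried out directly in the Euclidean norm so that a single $\kappa_\rho$ controls every coefficient vector. There is nothing further to compare: you have supplied the details the paper omits, and the only point worth flagging is the minor limiting argument needed to push the contour all the way to $|z|=\rho^{-1}$ (first deform to $|z|=1/\rho'$ for $\rho'<\rho$ and let $\rho'\to\rho$ using continuity on $\bar E_\rho$), which you implicitly rely on but do not spell out.
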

    Following the arguments in~\cite{Demanet2019a} for scalar functions, Lemma~\ref{thm:cheb} allows us to estimate the extrapolation error for $\boldsymbol{p}_{R}(t)$ if $R \sim \sqrt{M}$.
\begin{theorem}
    \label{thm:ls}
        Suppose that Assumption~\ref{assume:analyticity} holds and $R \leq \frac{1}{2}\sqrt{M-1}$.  Then the vector-valued polynomial $\boldsymbol{p}_{R} \in \PP_R$ defined in~\eqref{eq:prt} satisfies for every $t \in (1, (\rho + \rho^{-1})/2)$ the error bound
        \[
            \| \boldsymbol{x}(t) - \boldsymbol{p}_{R}(t)\|_2 \leq
            2\kappa_{\rho}   \Big[  \frac{1}{1-r}+ 
            \frac{C(M,R) \rho}{(\rho -1)\sqrt{\rho^2 r^2 -1}} \Big] r^{R+1},
        \]
        with $r = (t + \sqrt{t^{2} -1})/ \rho < 1$ and $C(M,R)$ defined as in Theorem~\ref{theorem:main}.
    \end{theorem}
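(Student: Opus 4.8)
The plan is to compare the least‑squares polynomial $\boldsymbol{p}_R$ with the near‑best Chebyshev approximant $\boldsymbol{x}_R$ of $\boldsymbol{x}$ of the same degree supplied by Lemma~\ref{thm:cheb}, exploiting that the discrete least‑squares fitting operator on $\PP_R$ reproduces any polynomial of degree at most $R$. Concretely, I would start from the triangle inequality
\[
 \|\boldsymbol{x}(t) - \boldsymbol{p}_R(t)\|_2 \le \|\boldsymbol{x}(t) - \boldsymbol{x}_R(t)\|_2 + \|\boldsymbol{x}_R(t) - \boldsymbol{p}_R(t)\|_2 ,
\]
and bound the two terms separately: the first is an extrapolated \emph{truncation} error for $\boldsymbol{x}$, whereas the second measures the effect of replacing the grid samples of $\boldsymbol{x}_R$ by those of $\boldsymbol{x}$ in the least‑squares problem~\eqref{eq:prt}.

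For the first term I would use that the coefficient bound $\|\boldsymbol{c}_{k,x}\|_2 \le 2\kappa_\rho\rho^{-k}$ of Lemma~\ref{thm:cheb} is the standard bound on the Chebyshev coefficients of $\boldsymbol{x}$ and holds for every degree $k$, so that $\boldsymbol{x}_R$ may be taken as the degree‑$R$ truncation of $\boldsymbol{x}(t)=\sum_{k\ge 0} q_k(t)\,\boldsymbol{c}_{k,x}$ and $\boldsymbol{x}(t)-\boldsymbol{x}_R(t)=\sum_{k>R} q_k(t)\,\boldsymbol{c}_{k,x}$. Writing $w := t+\sqrt{t^2-1}$, one has $|q_k(t)|=\tfrac12(w^k+w^{-k})\le w^k$ for $t>1$, and summing the geometric series of ratio $r=w/\rho<1$ gives
\[
 \|\boldsymbol{x}(t)-\boldsymbol{x}_R(t)\|_2 \le 2\kappa_\rho\sum_{k>R}(w/\rho)^k=\frac{2\kappa_\rho}{1-r}\,r^{R+1},
\]
which is the first bracketed term. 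This step is routine, and it is here that the upper restriction $t<(\rho+\rho^{-1})/2$ (i.e.\ $r<1$) becomes necessary.

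For the second term I would use that $Q_R(\tequi)$ has full row rank, so $Q_R(\tequi)Q_R(\tequi)^\dagger=I_{R+1}$ and hence the least‑squares fit over $\PP_R$ of the grid samples $X_R(\tequi)=C_x Q_R(\tequi)$ of $\boldsymbol{x}_R$ returns $\boldsymbol{x}_R$ itself. By linearity and~\eqref{eq:prt}, $\boldsymbol{x}_R(t)-\boldsymbol{p}_R(t)=E\,Q_R(\tequi)^\dagger\boldsymbol{q}_R(t)$ with $E=X_R(\tequi)-X(\tequi)$; since every node of $\tequi$ lies in $[-1,1]$, Lemma~\ref{thm:cheb} bounds each column of $E$ by $\tfrac{2\kappa_\rho}{\rho-1}\rho^{-R}$, so $\|E\|_2\le\|E\|_F\le \tfrac{2\kappa_\rho\sqrt M}{\rho-1}\rho^{-R}$. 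Therefore
\[
 \|\boldsymbol{x}_R(t)-\boldsymbol{p}_R(t)\|_2 \le \frac{2\kappa_\rho\sqrt M}{\rho-1}\,\rho^{-R}\,\|Q_R(\tequi)^\dagger\|_2\,\|\boldsymbol{q}_R(t)\|_2 .
\]
The factor $\|\boldsymbol{q}_R(t)\|_2$ is elementary: $q_k(t)^2\le w^{2k}$ yields $\|\boldsymbol{q}_R(t)\|_2\le w^{R+1}/\sqrt{w^2-1}$, where $w>1$ (i.e.\ $t>1$) keeps $\sqrt{w^2-1}=\sqrt{\rho^2r^2-1}$ nonzero. Using $\rho^{-R}w^{R+1}=\rho\,r^{R+1}$ and setting $C(M,R):=\sqrt M/\sigma_{\min}(Q_R(\tequi))$, the second term becomes $2\kappa_\rho\,C(M,R)\rho\,r^{R+1}/\big((\rho-1)\sqrt{\rho^2r^2-1}\big)$, and adding the first term reproduces exactly the claimed inequality.

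The hard part — and the only place where the hypothesis $R\le\tfrac12\sqrt{M-1}$ enters — is to show that this $C(M,R)$ is bounded by $5\sqrt5\sqrt{2R+1}\sqrt M/\sqrt{2(M-1)}$, i.e.\ to prove the uniform lower bound $\sigma_{\min}(Q_R(\tequi))\ge \sqrt{2(M-1)}/\big(5\sqrt5\sqrt{2R+1}\big)$ on the smallest singular value of the $(R+1)\times M$ Chebyshev--Vandermonde matrix at equispaced nodes. This is precisely the stability statement for discrete least‑squares polynomial fitting on an equispaced grid in the mildly oversampled regime where $R$ is of order $\sqrt M$, and it is where I would adapt the analysis of Demanet and Townsend~\cite{Demanet2019a} (equivalently, invoke a Marcinkiewicz--Zygmund‑type inequality for algebraic polynomials on equispaced points to bound the Gram matrix $M^{-1}Q_R(\tequi)Q_R(\tequi)^{T}$ from below). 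Since every other estimate above is applied columnwise or coefficientwise, the passage from the scalar setting of~\cite{Demanet2019a} to the present vector‑valued one is immediate, and this grid‑stability bound is the genuinely technical ingredient.
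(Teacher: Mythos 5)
Your proposal is correct and follows essentially the same route as the paper: the same triangle inequality splitting against the truncated Chebyshev expansion $\boldsymbol{x}_R$, the same identification $C_x - C_p = (X_R(\tequi)-X(\tequi))\,Q_R(\tequi)^{\dagger}$ with the columnwise Frobenius bound, the same geometric-series estimates for $|q_k(t)|$ and $\|\boldsymbol{q}_R(t)\|_2$, and the same reliance on the Demanet--Townsend lower bound for $\sigma_{\min}(Q_R(\tequi))$ at equispaced nodes (which the paper simply cites as \cite[Theorem~4]{Demanet2019a} rather than reproving). You correctly isolate that singular-value bound as the one genuinely technical ingredient and the only place the hypothesis $R\le\frac12\sqrt{M-1}$ enters.
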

    \begin{proof}
        Letting $\boldsymbol{x}_{R}$ be the polynomial from Lemma~\ref{thm:cheb}, we write
        \begin{align}
            \|\boldsymbol{x}(t) - \boldsymbol{p}_{R}(t) \|_2 & \leq    \|\boldsymbol{x}(t) - \boldsymbol{x}_{R}(t) \|_2 +  \|\boldsymbol{x}_{R}(t) - \boldsymbol{p}_{R}(t) \|_2 \nonumber \\
            & = \Big\| \sum_{k=R+1}^{\infty} \boldsymbol{c}_{k,x} q_{k}(t)\Big\|_2 + \|  (C_{x}- C_{p}) \boldsymbol{q}_{R}(t) \|_2 \nonumber \\
            & \leq   \sum_{k=R+1}^{\infty} \|  \boldsymbol{c}_{k,x} \|_2 | q_{k}(t)| + \|C_{x}- C_{p} \|_2 \| \boldsymbol{q}_{R}(t) \|_2.
            \label{eq:proof2}
        \end{align}
        To treat the second term in~\eqref{eq:proof2}, first note that, by definition, we have \[
X_R(\tequi) = \left[ \bx_R(t_{i-M})\,|\,\cdots\,|\,\bx_R(t_{i-1}) \right] = C_x Q_R(\tequi)         \]
and hence $C_x = X_R(\tequi) Q_R(\tequi)^\dagger$. Setting $\sigma:= \sigma_{\min}(Q_R(\tequi)) = 1/\|Q_R(\tequi)^\dagger\|_2$, we obtain
\begin{align*}
 \|C_x - C_p\|_2 & = \| (X_R(\tequi) - X(\tequi) ) Q_R(\tequi)^\dagger\|_2 
  \le \| X_R(\tequi) - X(\tequi)\|_2 / \sigma \\
 & \le \frac{\sqrt{M}}{\sigma} \cdot \max_{k=1,..,M} \| \boldsymbol{x}_{R}(t_{k}) - \boldsymbol{x}(t_{k}) \|_2 
 \leq  
    \frac{\sqrt{M}}{\sigma} \frac{2\kappa_{\rho}}{\rho-1}  \rho^{-R},
\end{align*}
where we used Lemma~\ref{thm:cheb} in the last inequality. Applying, once more, 
Lemma~\ref{thm:cheb} to the first term in~\eqref{eq:proof2} gives
\begin{align} \label{eq:proof3}
    \|\boldsymbol{x}(t) - \boldsymbol{p}_{R}(t) \|_2 & \leq   2\kappa_{\rho}   \Big[  \sum_{k=R+1}^{\infty} \rho^{-k} |q_{k}(t)| + 
    \frac{\sqrt{M}}{\sigma} \frac{\rho^{-R}}{\rho -1 } \| \boldsymbol{q}_{R}(t) \|_2  \Big]
\end{align} 
Because $|q_{k}(t)| \leq (t + \sqrt{t^{2}-1})^{k} \leq \rho^k r^k$ for $t>1$, we have that
\begin{align}
\| \boldsymbol{q}_{R}(t) \|_2^2 & \le \sum_{k = 0}^R (\rho r)^{2k} = (\rho r)^{2R}  \sum_{k = 0}^R (\rho r)^{-2k}   \le
\frac{(\rho r)^{2R+2}}{\rho^2 r^2-1}.
\label{eq:boundqr}
\end{align}
Inserted into~\eqref{eq:proof3}, this gives
        \begin{align*}
            \|\boldsymbol{x}(t) - \boldsymbol{p}_{R}(t) \|_2 & \leq   2\kappa_{\rho}   \Big[  \sum_{k=R+1}^{\infty} \rho^{-k} \rho^{k} r^{k} + 
            \frac{\sqrt{M}}{\sigma} \frac{\rho^{-R} (\rho r)^{R+1}}{(\rho -1)\sqrt{\rho^2 r^2 -1} } \Big] \\ & \leq   2\kappa_{\rho}   \Big[  \frac{1}{1-r}+ 
            \frac{\sqrt{M}\rho}{\sigma (\rho -1)\sqrt{\rho^2 r^2 -1}} \Big] r^{R+1}.
        \end{align*}
The proof is completed by inserting the lower bound 
    \begin{equation}
        \sigma = \sigma_{\min}(Q_{R}(\tequi)) \geq \frac{\sqrt{2}}{5\sqrt{5}} \frac{\sqrt{M-1}}{\sqrt{2R +1}},
        \label{eq:lb}
    \end{equation}
    which holds when $R \leq \frac{1}{2}\sqrt{M-1}$ according to~\cite[Theorem~4]{Demanet2019a}.
    \end{proof}

    Using Theorem~\ref{thm:ls} with $t=t_{i}$ and inserting the result in~\eqref{eq:boundres}, we have proven the statement of Theorem~\ref{thm:res}.

\subsection{Optimality of the prediction with compression}

When the matrix $X(\boldsymbol{t})$ is compressed via POD (Algorithm~\ref{alg:2}) or the randomized range finder (Algorithm~\ref{alg:4}), the orthonormal basis $Q \in \mathbb{R}^{n \times m}$ used in Algorithm~\ref{alg:1} spans a lower-dimensional subspace $\mathcal{S} \subseteq \Span(X)$.

 \begin{corollary} \label{corollary:compressed}
     Suppose that Algorithm~\ref{alg:1} is used with an orthonormal basis satisfying $\|(QQ^{T} -I) X(\boldsymbol{t_{\mathsf{equi}}}) \|_{2} \leq \varepsilon$ for some tolerance $\varepsilon > 0$. Under Assumption~\ref{assume:analyticity}, the initial guess 
     $\boldsymbol{s}^{*}$
     constructed by the algorithm satisfies the error bound
    \[
    \| A(t_{i})\boldsymbol{s}^{*} - \boldsymbol{b}(t_{i})\|_2  \leq 2 \|A(t_{i})\|_2 \kappa_{\rho}  \left[     \frac{1}{1-r}+ \frac{C(M,R) \rho} {\sqrt{\rho^2 r^2-1}} \left(\frac{1}{\rho -1}  + \frac{ \varepsilon  \rho^{R}}{2 \sqrt{M} \kappa_{\rho}} \right) \right] r^{R+1}
\]
    for any $R \leq \frac{1}{2} \sqrt{M-1}$. 
    \label{thm:res_cor}
\end{corollary}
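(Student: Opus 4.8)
The plan is to run the argument of Theorem~\ref{thm:res} verbatim, except that the comparison element $\boldsymbol{p}_{R}(t_{i})$, which lies in $\Span(X(\tequi))$ but in general not in the compressed subspace $\mathcal S$, is replaced by its orthogonal projection $QQ^{T}\boldsymbol{p}_{R}(t_{i})\in\mathcal S$. Since $\boldsymbol{s}^{*}$ minimizes the residual over $\mathcal S$ and $QQ^{T}\boldsymbol{p}_{R}(t_{i})\in\mathcal S$, the same two bounds that produced~\eqref{eq:boundres} give
\[
 \|A(t_{i})\boldsymbol{s}^{*}-\boldsymbol{b}(t_{i})\|_2 \le \|A(t_{i})\|_2\,\|QQ^{T}\boldsymbol{p}_{R}(t_{i})-\boldsymbol{x}(t_{i})\|_2 .
\]
Inserting $\pm\boldsymbol{p}_{R}(t_{i})$ and using the triangle inequality splits the right-hand side into the genuine extrapolation error $\|\boldsymbol{p}_{R}(t_{i})-\boldsymbol{x}(t_{i})\|_2$, which is already controlled by Theorem~\ref{thm:ls} at $t=t_{i}$, plus a projection error $\|(I-QQ^{T})\boldsymbol{p}_{R}(t_{i})\|_2$.

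For the projection error I would exploit the explicit form $\boldsymbol{p}_{R}(t)=X(\tequi)Q_{R}(\tequi)^{\dagger}\boldsymbol{q}_{R}(t)$ from~\eqref{eq:prt}, so that $(I-QQ^{T})\boldsymbol{p}_{R}(t_{i}) = (I-QQ^{T})X(\tequi)\,Q_{R}(\tequi)^{\dagger}\boldsymbol{q}_{R}(t_{i})$. Submultiplicativity of the spectral norm together with the hypothesis $\|(QQ^{T}-I)X(\tequi)\|_2\le\varepsilon$ yields
\[
 \|(I-QQ^{T})\boldsymbol{p}_{R}(t_{i})\|_2 \le \frac{\varepsilon}{\sigma}\,\|\boldsymbol{q}_{R}(t_{i})\|_2, \qquad \sigma:=\sigma_{\min}(Q_{R}(\tequi)) .
\]
Both remaining factors are already in hand: estimate~\eqref{eq:boundqr} (valid since $t_{i}>1$) gives $\|\boldsymbol{q}_{R}(t_{i})\|_2\le(\rho r)^{R+1}/\sqrt{\rho^{2}r^{2}-1}$, and the lower bound~\eqref{eq:lb} on $\sigma$ — applicable because $R\le\tfrac12\sqrt{M-1}$ — is precisely $1/\sigma\le C(M,R)/\sqrt{M}$. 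Hence
\[
 \|(I-QQ^{T})\boldsymbol{p}_{R}(t_{i})\|_2 \le \frac{\varepsilon\,C(M,R)}{\sqrt{M}}\cdot\frac{\rho^{R+1}r^{R+1}}{\sqrt{\rho^{2}r^{2}-1}} .
\]

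Finally I would add this to the Theorem~\ref{thm:ls} bound for $\|\boldsymbol{p}_{R}(t_{i})-\boldsymbol{x}(t_{i})\|_2$, multiply through by $\|A(t_{i})\|_2$, factor out $2\|A(t_{i})\|_2\kappa_{\rho}r^{R+1}$, and merge the two terms that carry $C(M,R)\rho/\sqrt{\rho^{2}r^{2}-1}$ into the single factor $\bigl(\tfrac{1}{\rho-1}+\tfrac{\varepsilon\rho^{R}}{2\sqrt{M}\kappa_{\rho}}\bigr)$, which is exactly the asserted estimate. I do not expect a real obstacle here; the only point needing care is the constant bookkeeping — recognizing that the $\sigma_{\min}$ lower bound~\eqref{eq:lb} equals $\sqrt{M}/C(M,R)$ — so that the projection contribution slots into the same algebraic shape as the least-squares term instead of spawning a separate, messier constant.
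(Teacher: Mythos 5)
Your proposal is correct and essentially identical to the paper's proof: the paper uses $QQ^{T}\boldsymbol{x}(t_{i})$ as the comparison element in $\mathcal S$ and splits $(QQ^{T}-I)\boldsymbol{x}(t_{i})$ into $(QQ^{T}-I)(\boldsymbol{x}(t_{i})-\boldsymbol{p}_{R}(t_{i}))$ plus $(QQ^{T}-I)\boldsymbol{p}_{R}(t_{i})$, which yields exactly the same two terms (extrapolation error via Theorem~\ref{thm:ls} and projection error via $\varepsilon$, \eqref{eq:boundqr}, and \eqref{eq:lb}) as your choice of $QQ^{T}\boldsymbol{p}_{R}(t_{i})$. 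Your constant bookkeeping, including $1/\sigma \le C(M,R)/\sqrt{M}$, matches the stated bound.
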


\begin{proof}
Let $\boldsymbol{p}_{R}(t) = X(\tequi) Q_{R}(\tequi)^{\dagger} \boldsymbol{q}_{R}(t)$ be the polynomial constructed in~\eqref{eq:prt}. Using that $\boldsymbol{s}^{*}$ satisfies the minimization problem~\eqref{eq:minrecall} and   $QQ^T \boldsymbol{x}(t_{i}) \in  \mathcal{S} = \Span(Q)$, we obtain:
    \begin{align}
    \| A(t_i) \boldsymbol{s^{*}} - \boldsymbol{b} (t_i)  \|_2 
    &\leq   \| A(t_i) Q Q^{T} \boldsymbol{x} (t_i) - \boldsymbol{b} (t_i)  \|_2 \nonumber \\ &\leq \|A(t_{i})\| _2\big[ \|(QQ^{T}-I) (\boldsymbol{x} (t_i) -\boldsymbol{p}_{R}(t_i) ) \|_2 \nonumber \\ & \quad \quad \quad  \quad \quad + \|  (Q Q^{T}- I)\boldsymbol{p}_{R}(t_i)  \|_2 \big] \nonumber \\ &\leq  \|A(t_{i})\|_2 \big[ \|\boldsymbol{x} (t_i) -\boldsymbol{p}_{R}(t_i) \|_2 \nonumber \\ & \quad \quad \quad  \quad \quad + \|  (Q Q^{T}- I)X(\tequi) Q_{R}(\tequi)^{\dagger} \boldsymbol{q}_{R}(t_i)  \|_2 \big].
    \nonumber 
\end{align}
The first term is bounded using Theorem~\ref{thm:ls} with $t = t_i$. For the second term, we use the bound in~\eqref{eq:boundqr} on $\|\boldsymbol{q}_{R}(t_{M+1})\|$ to obtain
\begin{align*}
    \|  (Q Q^{T}- I)X(\tequi) Q_{R}(\tequi)^{\dagger} \boldsymbol{q}_{R}(t) \|_2 &\leq \|  (Q Q^{T}- I)X(\tequi) \|_{2} \|\boldsymbol{q}_{R}(t_{M+1})\|_2 / \sigma \\  &\leq  \frac{ \varepsilon (\rho r)^{R+1}}{\sigma \sqrt{\rho^2 r^2-1}},
\end{align*}
with $\sigma:= \sigma_{\min}(Q_R(\tequi))$. The proof is completed using 
the lower bound~\eqref{eq:lb} on x$\sigma$.
\end{proof}

\section{Numerical results: Test Case}
\label{sec:test}

To test the subspace acceleration algorithms proposed in Section~\ref{sec:algorithms}, we first consider a simplified setting, an elliptic PDE with an explicitly given time- and space-dependent coefficient $a(\boldsymbol{x},t)$ and source term $g(\boldsymbol{x},t)$:
\begin{align}
    \label{eq:diff}
    \begin{cases}
        \nabla \cdot (a(\boldsymbol{x},t) \nabla f(\boldsymbol{x},t)) = g(\boldsymbol{x},t)  & \quad \quad \text{in } \Omega \\
        f(\boldsymbol{x},t) = 0   &\quad \quad \text{on } \partial \Omega 
    \end{cases}
\end{align}
We consider the domain $\Omega = \left[0,1\right]^{2} \subset \mathbb{R}^{2}$ and discretize~\eqref{eq:diff} on a uniform two-dimensional Cartesian grid using a centered finite difference scheme of order $4$. This leads to a linear system for the vector of unknowns $\boldsymbol{{f}}(t)$, for which both the matrix and the right-hand side depend on $t$:
    \begin{equation} \label{eq:diff1}
           A(t) \boldsymbol{{f}}(t)= \boldsymbol{{g}}(t) .
    \end{equation}
We discretize the time variable on the interval $\left[t_{0}, \, t_{f} \right]$ with a uniform timestep $\Delta t$ on $N_{t}$ points, such that $t_{f} = t_{0} + N_{t} \Delta t$. Evaluating~\eqref{eq:diff1} in these $N_{t}$ instants, we obtain a  sequence of linear systems of the same type as~\eqref{eq:seq}.

We set $a(\boldsymbol{x},t) = \exp^{\left[-(x-0.5)^{2} - (y-0.5)^{2}\right]} \cos (tx) +2.1$ and choose the right-hand side $g(\boldsymbol{x},t)$  such that \[f(\boldsymbol{x},t) = \sin(4  \pi y t)  \sin(15 \pi x t) \left[ 1+ \sin(15  \pi x t) \cos(3  \pi y t) \exp^{\left[(x-0.5)^{2} + (y-0.5)^{2} -0.25^{2} \right]} \right] \] is the exact solution of~\eqref{eq:diff}.
 The tests are performed using MATLAB 2023a on an M1 MacbookPro. We employ GMRES as  iterative solver for the linear system, with tolerance $10^{-7}$ and incomplete LU factorization as preconditioner. We  start the simulations at $t_{0}= 2.3\, s$ and perform  $N_{t} = 200$ timesteps.
\begin{figure}[H]
    \centering
        \begin{subfigure}[t]{0.49\textwidth}
            \centering
            \includegraphics[width=\textwidth]{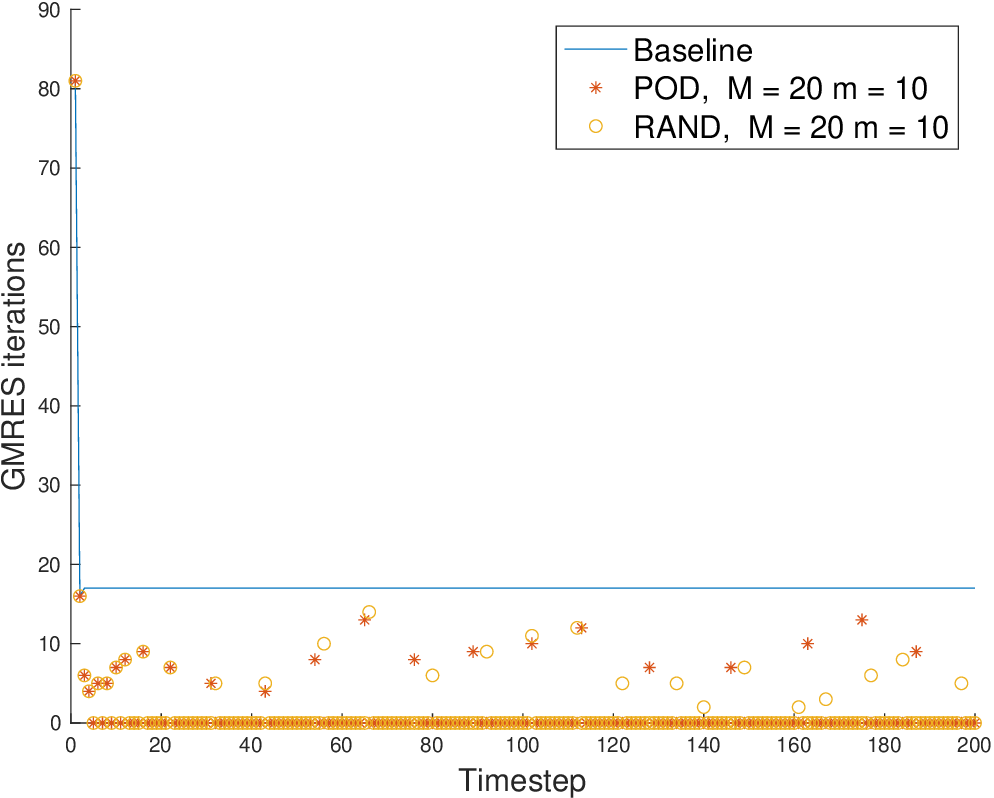}
            \subcaption{$\Delta t = 10^{-5}$}
            \label{fig:1_1}
        \end{subfigure}
        \hfill
        \begin{subfigure}[t]{0.49\textwidth}
            \centering
            \includegraphics[width=\textwidth]{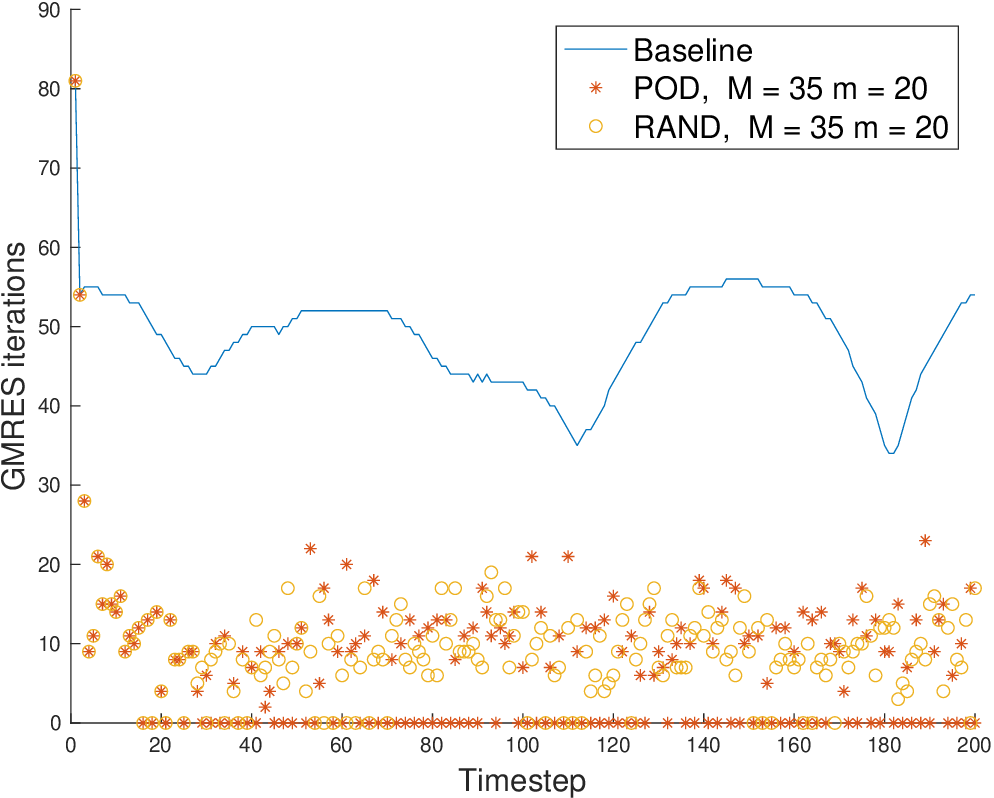}
            \subcaption{$\Delta t = 10^{-3}$}
            \label{fig:1_2}
        \end{subfigure} 
        \caption{\centering GMRES iterations per timestep when solving equation~\eqref{eq:diff1} with different initial guesses.}
        \label{fig:1}
\end{figure}

The results reported in Figure~\ref{fig:1} use a spatial grid of dimension $100 \times 100$, leading to linear systems of size $n = 10000$. Different values of $M$, the number of previous solutions retained in the history matrix $X$, and $m$, the dimension of the reduced-order model, were tested. We found that the choices $M = 20, \, m = 10$ and $M = 35,\, m = 20$ lead to good performance for $\Delta t = 10^{-5}$ and $\Delta t = 10^{-3}$, respectively.
The baseline is (preconditioned) GMRES with the previous solution used as initial guess; the resulting number of iterations is indicated with the solid blue line (\textit{``Baseline"}) in Figure~\ref{fig:1}. This is compared to the number of iterations obtained by applying GMRES when Algorithm~\ref{alg:1} is employed to compute the initial guess, in combination with both the POD basis in Algorithm~\ref{alg:2} (\textit{``POD"} in the graph) and the Randomized Range Finder in Algorithm~\ref{alg:4} (\textit{``RAND"} in the graph). For the Randomized Range Finder algorithm, the matrix $\Omega$ is computed from scratch only every $50$ timesteps, while in the other timesteps is updated as described in Algorithm~\ref{alg:4}, resulting in a computationally efficient version of the algorithm. Both the POD and Randomized versions of the acceleration method give a remarkable gain in computational time with respect to the baseline. 

When employing  $\Delta t = 10^{-5}$, in Figure~\ref{fig:1_1}, the number of iterations computed by the linear solver vanishes most of the time, since the initial residual computed with the new initial guess is already below the tolerance, set to $10^{-7}$ in this case. It is worth noticing that the new randomized method gives an acceleration comparable to the existing POD one, but it requires a much lower computational cost, as described in Section~\ref{sec:algorithms}. 

The results obtained for larger timesteps, in Figure~\ref{fig:1_2}, are slightly worse, as expected, since it is less easy to predict new solutions using the previous ones when they are further apart in time. Nevertheless, the gain of the acceleration method is still visible, obtaining always less than half iterations with respect to the baseline and adding the solution of a reduced-order system of dimension $m= 20 $ only, compared to the full solution of dimension $10000$. 
The resulting advantage of the new method can indeed be observed in  Figure~\ref{fig:5}, which compares the computational time needed by the solver using the baseline approach with the one obtained by using the new guess (this includes the time employed to compute the guess). The timings showed are the ones needed to produce the results in Figure~\ref{fig:1}. The time employed by the POD method has not been included since it is significantly higher than the baseline, as predicted by the analysis in Section~\ref{sec:POD}.

\begin{figure}[h]
    \centering
        \begin{subfigure}[t]{0.49\textwidth}
            \centering
            \includegraphics[width=\textwidth]{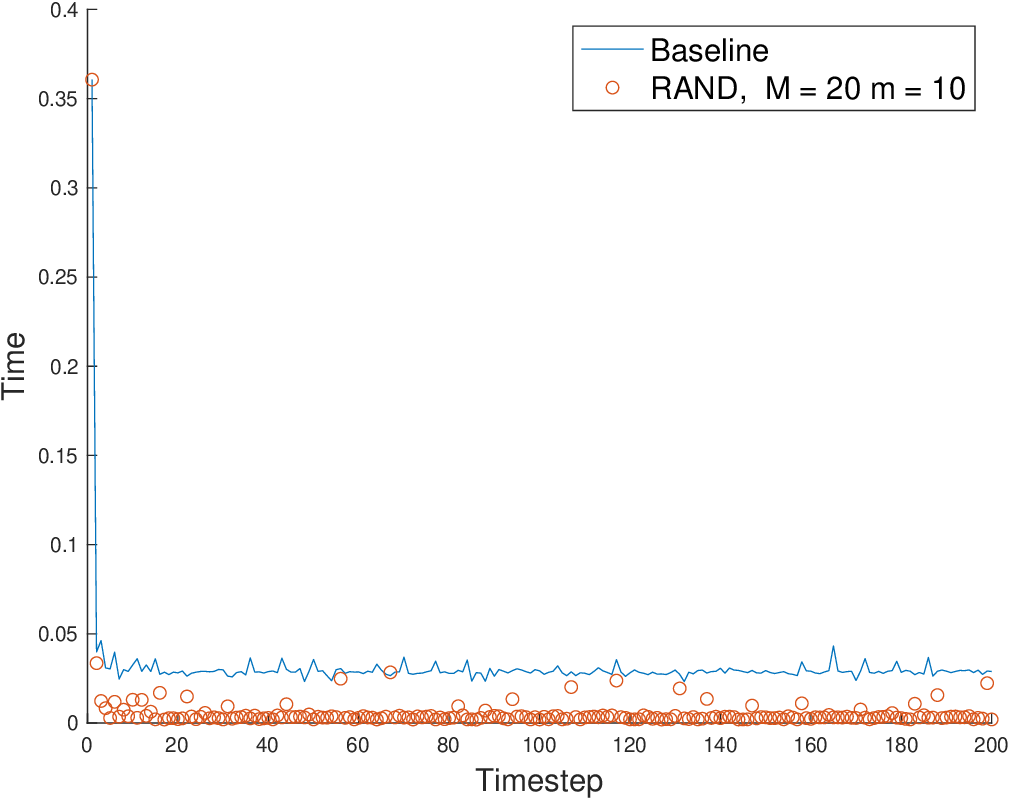}
            \subcaption{$\Delta t = 10^{-5} $ }
        \end{subfigure}
        \hfill
        \begin{subfigure}[t]{0.49\textwidth}
            \centering
            \includegraphics[width=\textwidth]{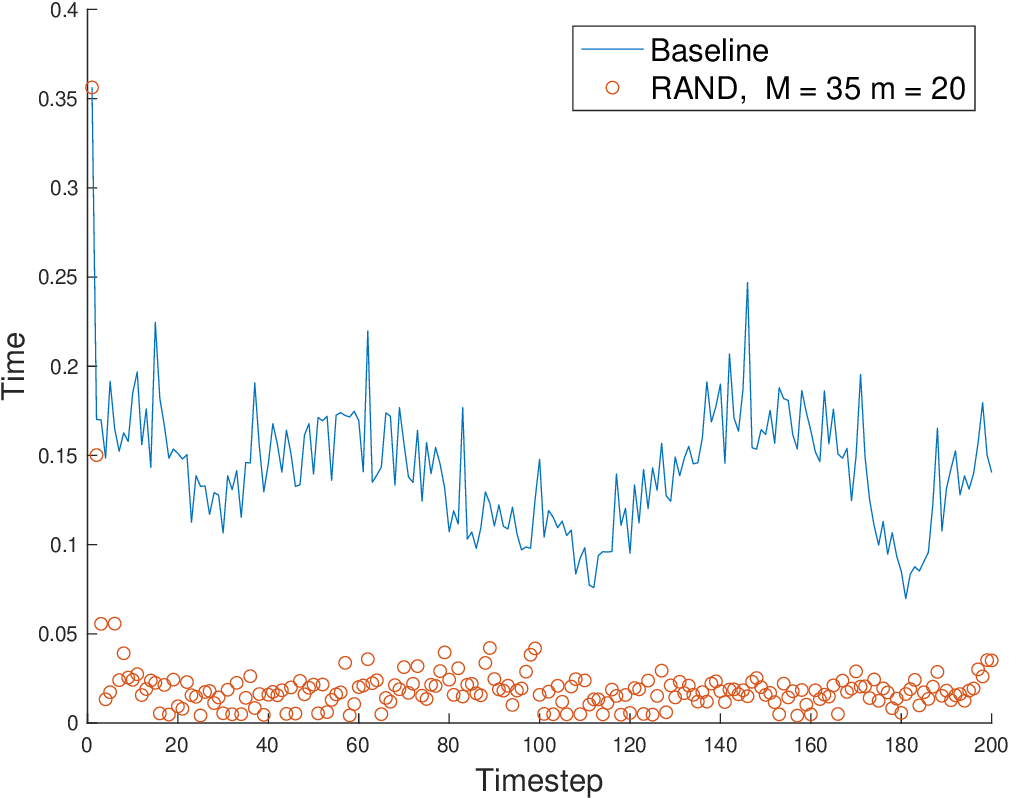}
            \subcaption{$\Delta t = 10^{-3} $}
        \end{subfigure} 
       
        \caption{  \centering Computational time per timestep corresponding to Figure~\ref{fig:1_1} and  Figure~\ref{fig:1_2}. The average speedup per iteration of the randomized method with respect to the baseline is a factor 9 for $\Delta t = 10^{-5} $ and a factor 10 for $\Delta t = 10^{-3}$. }
        \label{fig:5}
\end{figure}

\section{Numerical results: Plasma Simulations}
\label{sec:GBS}

In this Section, we apply the subspace acceleration method to the numerical simulation of plasma turbulence in the outermost plasma region of tokamaks, where the plasma enters in contact with the surrounding external solid walls, resulting in strongly non-linear  phenomena occurring on a large range of time and length scales. 
\begin{figure}[h]
    \centering
        \includegraphics[width=0.7\textwidth]{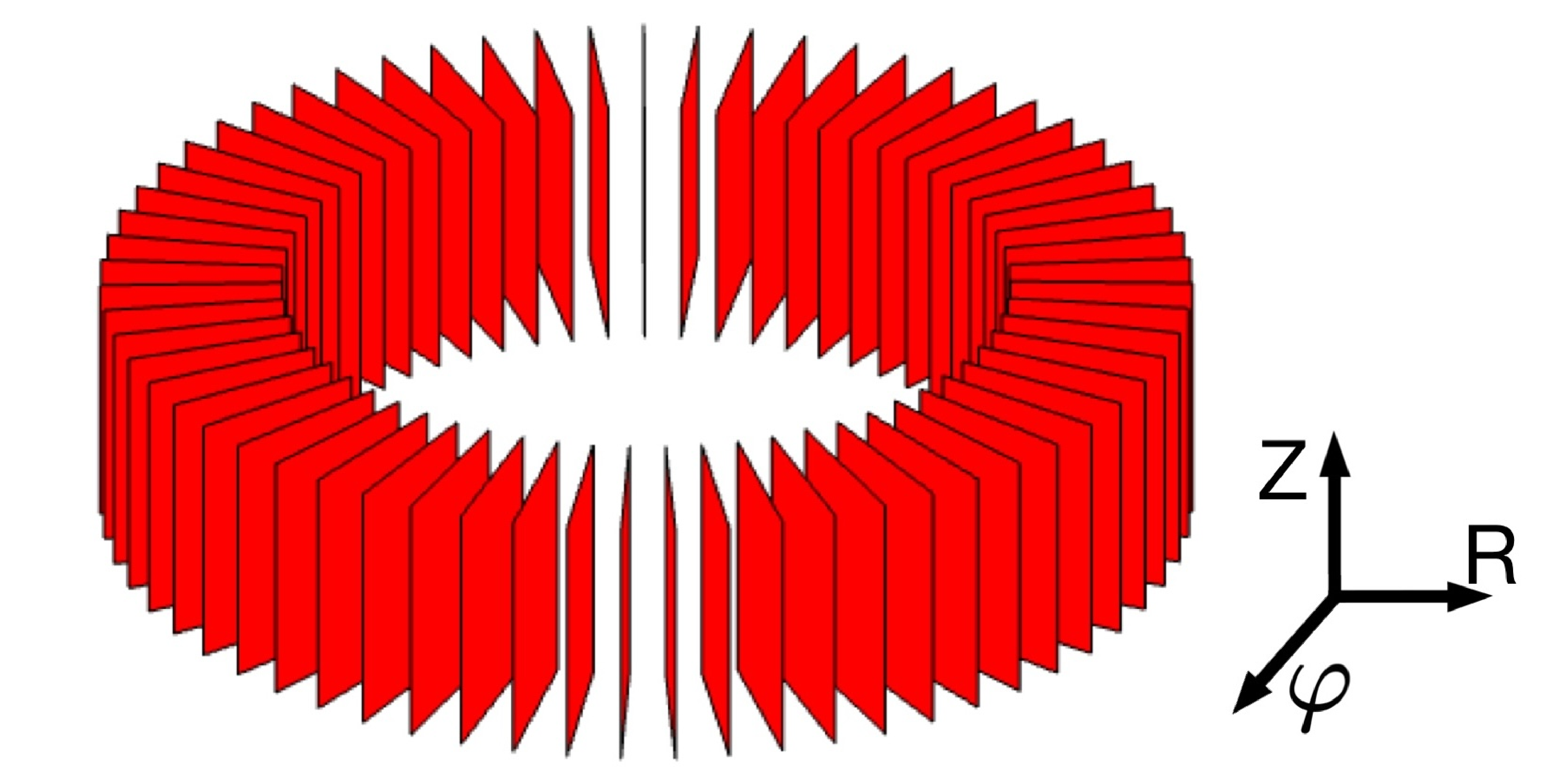}
    	\caption {\centering GBS computational domain. The toroidal direction is along $\varphi$, the radial direction is along $R$, and the vertical direction is along $Z$. The domain consists of $N_{\varphi}$ rectangular poloidal planes, each discretized on a $N_{R} \times N_{Z}$ Cartesian grid.}
    	\label{fig:tok}
\end{figure}
In this work, we consider GBS (Global Braginskii Solver) ~\cite{Giacomin2021,Ricci2012}, a three-dimensional, flux-driven, two-fluid code developed for the simulation of the plasma dynamics in the boundary of a fusion device.
GBS implements the Braginskii two-fluid model~\cite{Braginskii1965}, which describes a quasi-neutral plasma through the conservation of density, momentum, and energy. This results in six coupled three-dimensional time-evolving non-linear equations which evolve the plasma dynamics in $\Omega$, a 3D toroidal domain with rectangular poloidal cross section, as represented in Figure~\ref{fig:tok}.
The fluid equations are coupled with Maxwell equations, specifically Poisson and Ampére, elliptic equations for the electromagnetic variables of the plasma. In the limit considered here the elliptic equations reduce to a set of two-dimensional  algebraic constraints decoupled along the toroidal direction, therefore to be satisfied independently on each poloidal plane. The differential equations are spatially discretized  on a uniform Cartesian grid employing a finite difference method, resulting in a system of differential-algebraic equations of index one~\cite{Hairer1989}:
\begin{equation}
    \begin{cases}
        \partial_{t} \boldsymbol{f}(t)= \boldsymbol{\mathcal{Y}}(\boldsymbol{f}(t),\boldsymbol{x}(t)) \quad \quad \quad \, \, \, \text{in } \Omega
        \\ A_{k}(\boldsymbol{f}(t)) \boldsymbol{x}_{k}(t) =  \boldsymbol{b}_{k}(\boldsymbol{f}(t)) \quad \quad \text{for each $k$th poloidal plane }
    \end{cases}
    \label{eq:sys_semidisc}
\end{equation}
where $\boldsymbol{\mathcal{Y}}(\boldsymbol{f}(t),\boldsymbol{x}(t))$ is a non-linear, 6-dimensional differential operator and  
\begin{align*} &\boldsymbol{x}(t) = \left[ \boldsymbol{x}_{1}(t), \cdots, \boldsymbol{x}_{k}(t) \cdots, \boldsymbol{x}_{N_{Z}}(t)  \right] \in \mathbb{R}^{N_{R}N_{\varphi}N_{Z}}, \\ &\boldsymbol{f}(t) = \left[ \boldsymbol{f}_{1}(t), \cdots, \boldsymbol{f}_{k}(t) \cdots, \boldsymbol{f}_{N_{Z}}(t)  \right]  \in \mathbb{R}^{N_{R}N_{\varphi}N_{Z}}
\end{align*}
 are the vector of, respectively, the electromagnetic and fluid quantities solved for by GBS, where the solutions of all the $N_{Z}$ poloidal planes are stacked together. More precisely,  the time evolution of the fluid variables, $\boldsymbol{f}$, is  coupled with the set of linear systems $A_{k}(\boldsymbol{f}(t)) \boldsymbol{x}_{k}(t) =  \boldsymbol{b}_{k}(\boldsymbol{f}(t))$ which result from the discretization of Maxwell equations. Indeed, the matrix $A_{k} \in \mathbb{R}^{N_{R}N_{Z} \times N_{R}N_{Z} }$ and right-hand side $\boldsymbol{b_{k}} \in \mathbb{R}^{N_{R}N_{Z}}$ depend on time through $\boldsymbol{f}$.

In GBS, system~\eqref{eq:sys_semidisc} is integrated using a Runge-Kutta scheme of order four,  on the discrete times $\left\{ t_{i}  \right \}_{i=1}^{N_{t}}$, with step-size $\Delta t$. Given $\boldsymbol{f}^{i}$ and $\boldsymbol{x}^{i}$, the value of $\boldsymbol{f}$ and $\boldsymbol{x}$ at time $t_{i}$,  the computation of $\boldsymbol{ {f}}^{i+1}$, requires performing three intermediate substeps where the quantities $\boldsymbol{ {f}}^{i+1, j}$ for $ j =1,2,3$ are computed.
To guarantee the consistency and convergence of the Runge-Kutta integration method~\cite{Hairer1989}, the algebraic constraints are solved at every substep, computing  $\boldsymbol{ {x}}^{i+1,j}_{k}$ for $ j =1,2,3$ and for each $k-$th poloidal plane.
As a consequence, the linear systems  $A_{k}(\boldsymbol{f}(t)) \boldsymbol{x}_{k}(t) =  \boldsymbol{b}_{k}(\boldsymbol{f}(t))$  are assembled and solved four times for each of the $N_{\varphi}$ poloidal planes, to advance the full system~\eqref{eq:sys_semidisc} by one timestep. Since the timestep $\Delta t$ is constrained to be small from the stiff nature of the GBS model, the solution of the linear systems is among the most computationally expensive part of GBS simulations.

In GBS, the linear system is solved using GMRES, with the algebraic multigrid preconditioner \textit{boomerAMG} from the HYPRE library~\cite{Falgout2002}, a choice motivated by previous investigations~\cite{Giacomin2021}. 
The subspace acceleration algorithm proposed in Section~\ref{sec:algorithms} is implemented in the GBS code and, given the results shown in Section~\ref{sec:test}, the randomized version of the algorithm is chosen. The results reported are obtained from GBS simulations on one computing node. The poloidal planes of the computational domain are distributed among 16 cores, specifically of type Intel(R) Core i7-10700F CPU at 2.90GHz. GBS is implemented in Fortran 90, and relies on the PETSc library~\cite{Balay1998} for the linear solver and  Intel MPI 19.1 for the parallelization.

We consider the simulation setting described in~\cite{Giacomin2021}, taking as initial conditions the results of a simulation in a turbulent state. 
We use a  Cartesian grid of size of $N_{R} = 150,\, N_{Z} = 300$ and  $N_{\varphi}=64$, with additional 4 ghost points in the $Z$ and $R$ directions. Therefore, the imposed algebraic constraints result in 64 sequences of linear systems of dimension $ N_{R} N_{Z} \times  N_{R} N_{Z} = 46816 \times 46816$. The timestep employed is $\Delta t = 0.7 \times 10^{-5}$. The sequence of linear systems we consider represents the solution of the Poisson equation on one fixed poloidal plane, but the same considerations apply to the discretization of Ampére equation.

\begin{figure}[H]
    \centering
        \begin{subfigure}[t]{0.49\textwidth}
            \centering
            \includegraphics[width=\textwidth]{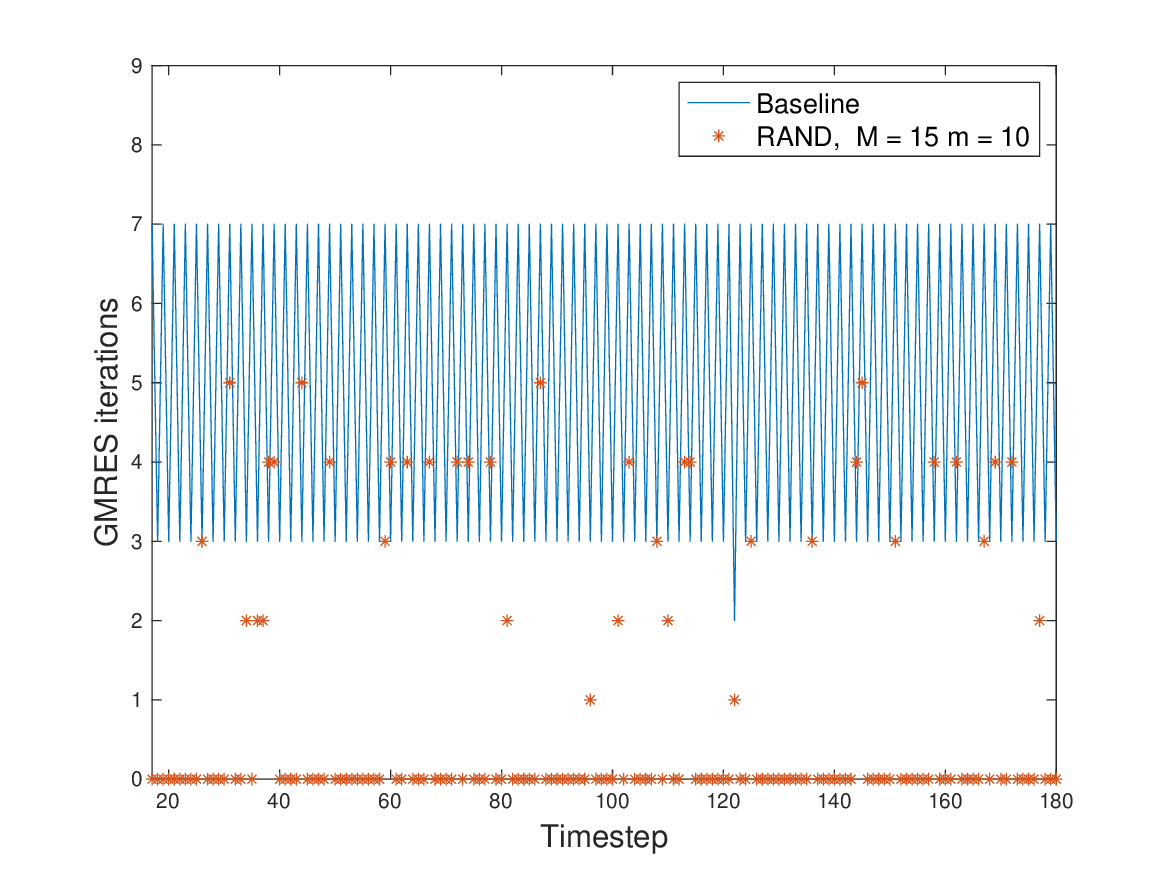}
            \subcaption{GMRES iterations per timestep}
            \label{fig:2_1}
        \end{subfigure}
        \hfill
        \begin{subfigure}[t]{0.49\textwidth}
            \centering
            \includegraphics[width=\textwidth]{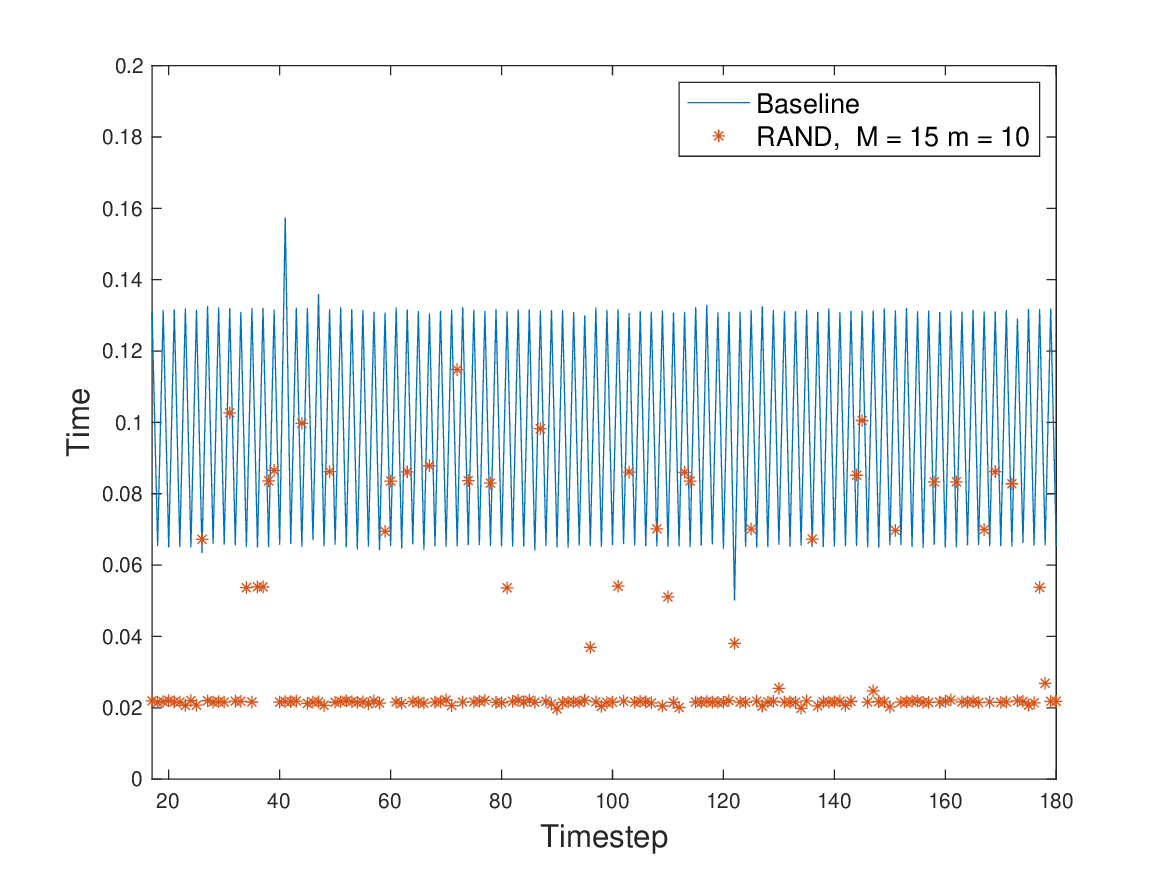}
            \subcaption{Computational time per timestep}
            \label{fig:2_2}
        \end{subfigure} 
        
        \caption{ \centering  Performance of the algorithm applied to the solution of Poisson equation in GBS simulations. The time  for the RAND algorithm is on average approximately one fourth of the time for the baseline.}
        \label{fig:2}
\end{figure}

In Figure~\ref{fig:2_1} the number of iterations obtained with the method proposed in Section~\ref{sec:algorithms}, denoted as \textit{``RAND"} is compared with the ones obtained using  the previous step solution as initial guess, depicted in blue as \textit{``Baseline"}. 
We notice that, employing the acceleration method, the number of GMRES iterations needed for each solution of the linear system is reduced by a factor 2.9, on average, at the cost of  computing a solution of an $m \times m $ reduced-order system.
In Figure~\ref{fig:2_2} the wall clock time required for the solution of the systems is shown. The baseline approach is compared to the accelerated method, where we also take into account the cost of computing the initial guess. Thanks to the randomized method employed, the process of generating the guess is fast enough to provide a time speed up of a factor of 6.5 per iteration.

The employed values of $M=15$, the number of previous solutions retained, and $m=10$, the dimension of the reduced-order model, are the ones found to give a good balance between the decrease in the number of iterations and the computational cost of the reduced-order model. In Table~\ref{tab:1} the results for different values of $M$ and $m$ are reported. It is worth noticing that an average  number of GMRES iterations per timestep smaller than one implies that often the initial residual
obtained with the initial guess is below the tolerance set for the solver.
It is possible to notice that higher values of $m$ lead to very small number of iterations, but the overall time speedup is reduced since the computation of the guess becomes more expensive.

\begin{table}[h]
    \begin{tabular}{c|c|c|c|c|c}
        M  & m  & \begin{tabular}[c]{@{}c@{}}Average time \\ per timestep {[}s{]}\end{tabular} & \begin{tabular}[c]{@{}c@{}}Time \\ speedup\end{tabular} & \begin{tabular}[c]{@{}c@{}}Average GMRES\\  iterations per timestep\end{tabular} & \begin{tabular}[c]{@{}c@{}}Iterations \\ speedup\end{tabular} \\ \hline
        15 & 6  & 0.0452        & 2.1797                                                  & 2.0183            & 2.4743                                              \\ \hline
        15 & 8  & 0.0347        & 2.8352                                                  & 1.1098            & 4.5            \\ \hline
        \textbf{15} & \textbf{10} & \textbf{0.0339}        & \textbf{2.9071}                                                  & \textbf{0.76}              & \textbf{6.552}          \\ \hline
        20 & 10 & 0.0358        & 2.7468                                                  & 0.7862            & 6.336          \\ \hline
        20 & 15 & 0.0435                                                                       & 2.2572                                                  & 0.5031                                                                           & 9.9                                                           \\ \hline
        30 & 8  & 0.0485        & 2.0255                                                  & 1.698             & 2.9328         \\ \hline
        30 & 12 & 0.0375        & 2.6185                                                  & 0.3758            & 13.25          \\ \hline
        30 & 15 & 0.05                                                                         & 1.9633                                                  & 0.579                                                                            & 8.3371                        
        \end{tabular}
    \caption{  \centering
        GBS simulations result corresponding to different values of $M$ and $m$. The iteration and time speedups are computed on the total of 180 linear systems, with respect to the baseline, that has an average number of 5 GMRES iterations and an average time per timestep of $0.0828$ s. The highlighted row corresponds to the best result obtained in terms of time speedup.}
    \label{tab:1}
    \end{table}

\section {Conclusions}

In this paper, we propose a novel approach for accelerating the solution of a sequence of large-scale linear systems that arises from, e.g., the discretization of time-dependent PDEs. Our method generates an initial guess from the solution of a reduced-order model, obtained by extracting relevant components of previously computed solutions using dimensionality reduction techniques. Starting from an existing POD-like approach, we  accelerate the process by employing a randomized algorithm.
A convergence analysis is performed, which applies to both approaches, POD and the randomized algorithm and shows how the accuracy of the method increases with the history size.  
A test case displays how POD leads to a noticeable decrease in the number of iterations, but at the same time a nearly equal decrease is achieved by the cheaper randomized method, that leads to a time  speedup per iteration of a factor 9.
In real applications such as the plasma simulations described in Section~\ref{sec:GBS}, the speedup is more modest, given the stiff nature of the problem  which constrains the timestep of the explicit integration method to be very small, but still practically  relevant. 

\vspace{3mm}

\textbf{Acknowledgements.} The authors thank the anonymous reviewers for helpful feedback.

This work has been carried out within the framework of the EUROfusion Consortium, via the Euratom Research and Training Programme (Grant Agreement No 101052200 — EUROfusion) and funded by the Swiss State Secretariat for Education, Research and Innovation (SERI). Views and opinions expressed are however those of the author(s) only and do not necessarily reflect those of the European Union, the European Commission, or SERI. Neither the European Union nor the European Commission nor SERI can be held responsible for them.

\textbf{Data Availability Statement.} The data that support the findings of this study are available upon reasonable request from the authors.

\printbibliography

\end{document}